\documentclass[a4paper]{scrartcl}
\usepackage[T1]{fontenc}
\usepackage{amsmath,bbm}
\usepackage[english]{babel}
 \usepackage{amsthm}
\usepackage{enumerate}
\usepackage[usenames,dvipsnames]{color}
\usepackage[all]{xy}
\usepackage{verbatim}

\usepackage[charter]{mathdesign}

\newcommand{\N}{\mathbb{N}}
\newcommand{\Z}{\mathbb{Z}}
\newcommand{\Q}{\mathbb{Q}}

\newcommand{\OreGen}{R[x;\sigma,\delta]}
\newcommand{\SkewPol}{R[x;\sigma,0]}
\newcommand{\DiffPol}{R[x;\identity_R,\delta]}

\DeclareMathOperator{\identity}{id}

\DeclareMathOperator{\CHAR}{char}

\theoremstyle{plain}
\newtheorem{theorem}{Theorem}[section]
\newtheorem{lemma}[theorem]{Lemma}
\newtheorem{prop}[theorem]{Proposition}
\newtheorem{corollary}[theorem]{Corollary}
\theoremstyle{definition}
\newtheorem{definition}[theorem]{Definition}
\newtheorem{exmp}[theorem]{Example}

\newtheorem{remark}[theorem]{Remark}
\newtheorem{question}{Question}

\begin{document}
\title{Maximal commutative subrings and simplicity of Ore extensions}

\author{Johan {\"O}inert\footnote{Department of Mathematical Sciences, University of Copenhagen,  Universitetsparken 5, DK-2100 Copenhagen \O, Denmark, E-mail: oinert@math.ku.dk} \and
 Johan Richter\footnote{Centre for Mathematical Sciences, Lund University, Box 118, SE-22100 Lund, Sweden, E-mail: johanr@maths.lth.se} \and Sergei D. Silvestrov\footnote{Division of Applied Mathematics,  The School of Education, 
Culture and Communication,
M{\"a}lardalen University, Box 883, SE-72123 V{\"a}ster{\aa}s, Sweden, E-mail: sergei.silvestrov@mdh.se}}

\date{}

\maketitle

\begin{abstract}

The aim of this article is to describe necessary and sufficient conditions for simplicity of Ore extension rings, with an emphasis on differential polynomial rings. We show that a differential polynomial ring,
 $R[x;\identity_R,\delta]$, is simple if and only if 
its center is a field and $R$ is $\delta$-simple. When $R$ is commutative we note that the centralizer of $R$ in $\OreGen$ is a maximal commutative subring containing $R$ and, in the case when $\sigma=\identity_R$,
 we show that it intersects every non-zero ideal of $\DiffPol$ non-trivially. Using this we show that if $R$ is $\delta$-simple and maximal commutative in $\DiffPol$, then $\DiffPol$ is simple. We also show that under
some conditions on $R$ the converse holds.\\

\noindent {\bf Keywords}: Ore extension rings, maximal commutativity, ideals, simplicity
 \\ {\bf Mathematical Subject classification 2010}: 16S32,16S36,16D25 
\end{abstract}


\section{Introduction}

A topic of interest in the field of operator algebras is the connection between properties of dynamical systems and algebraic properties of crossed products associated with them. More specifically the question when  
a certain canonical subalgebra is maximal commutative and has the ideal intersection property, i.e. each non-zero ideal of the algebra intersects the subalgebra non-trivially. For a topological dynamical systems $(X,\alpha)$ 
one may define a crossed product C*-algebra $C(X) \rtimes_{\tilde{\alpha}} \Z $ where 
$\tilde{\alpha}$ is an automorphism of $C(X)$ induced by $\alpha$. It turns out that the property known as topological freeness of the dynamical system is equivalent to $C(X)$ being a maximal commutative subalgebra of 
$C(X) \rtimes_{\tilde{\alpha}} \Z$ and also equivalent to the condition that every non-trivial closed ideal has a non-zero intersection with $C(X)$. An excellent reference for this correspondence is \cite{TomiyamaBook}.
For analogues, extensions and applications of this theory in the study of dynamical systems, harmonic analysis, quantum field theory, string theory, integrable systems, fractals and wavelets see \cite{ArchbordSpielberg, 
CarlsenSS,DutkayJorg3,DutkayJorg4,MR2195591ExelVershik,MACbook3,
OstSam-book,TomiyamaBook}.

For any class of graded rings, including gradings given by semigroups or even filtered rings (e.g. Ore extensions), it makes sense to ask whether the ideal intersection property is related
to maximal commutativity of the degree zero component. 
For crossed product-like structures, where one has a natural action, it further makes sense to ask how the above mentioned properties of the degree zero component are related to properties of the action.

These questions have been considered recently for algebraic crossed products and Banach algebra crossed products, both in the traditional context of crossed products by groups as well as generalizations
to graded rings, crossed products by groupoids and general categories in
\cite{deJeu,LundstromOinert,OinertSimplegroupgradedrings,OinertLundstrom,OinertLundstrom2,OinertSS1,OinertSS2,OinertSS3,OinertSS4,OinertSS5,SSD1,SSD2,SSD3,SvT}.

Ore extensions constitute an important class of rings, appearing in extensions of differential calculus, in non-commutative geometry, in quantum groups and algebras and as a uniting framework for many algebras
appearing in physics and engineering models. An Ore extension of $R$ is an overring with a generator $x$ satisfying $xr=\sigma(r)x+\delta(r)$ for $r \in R$ for some endomorphism $\sigma$ and a 
 $\sigma$-derivation $\delta$. 

This article aims at studying the centralizer of the coefficient subring of an Ore extension, investigating conditions for the simplicity of Ore extensions and demonstrating the connections between these two topics.

Necessary and sufficient conditions for a differential polynomial ring (an Ore extension with $\sigma = \identity_R$) to be simple have been studied before. An early paper by Jacobson \cite{Jacobson} studies the case when
$R$ is a division ring of characteristic zero. His results are generalized in the textbook \cite[Chapter 3]{CozzensFaith} in which Cozzens and Faith prove that if $R$ is a 
$\Q$-algebra and $\delta$ a derivation on $R$ then $R[x;\identity_R, \delta]$ is simple if and only if $\delta$ is a so called outer derivation and the only ideals invariant under $\delta$ are $\{0\}$ and $R$ itself.  In his
PhD thesis \cite{Jordan} Jordan shows that
if $R$ is a ring of characteristic zero and with a derivation $\delta$ then $R[x;\identity_R,\delta]$ is simple if and only if $R$ has no non-trivial $\delta$-invariant ideals and $\delta$ is an 
outer derivation. In $\cite{Jordan}$ Jordan also shows that if $\DiffPol$ is simple, then $R$ has zero or prime characteristic and gives necessary and sufficient conditions for $\DiffPol$ to be simple when $R$ has prime 
characteristic.
(See also \cite{Jordan2}.)

In \cite{CozzensFaith} Cozzens and Faith
also prove that if $R$ is an commutative domain, then $R[x;\identity_R,\delta]$ is simple if and only if the subring of constants, $K$, is a field (the constants are the elements in the kernel of the derivation) and $R$ is 
infinite-dimensional as a vector space over $K$. In \cite[Theorem 2.3]{Goodearl82} Goodearl and Warfield prove that if $R$ is a commutative ring and $\delta$ a derivation on $R$, then $R[x;\identity_R,\delta]$ is simple if and only if there are
no non-trivial $\delta$-invariant ideals (implying that the ring of constants, $K$, is a field) and $R$ is infinite-dimensional as a vector space over $K$.
  
McConnell and Sweedler \cite{McConnell} study simplicity criteria for smash products, a generalization of differential polynomial rings. 

Conditions for a general Ore extension to be simple have been studied in \cite{LamLeroy} by Lam and Leroy. Their Theorem 5.8 says that $S=R[x;\sigma,\delta]$ is non-simple if and only if 
there is some $R[y;\sigma',0]$ that can be embedded 
in $S$. 
See also \cite[Theorem 4.5]{LamLeroy} and \cite[Lemma 4.1]{JainLamLeroy} for necessary and sufficient conditions for $R[x;\sigma,\delta]$ to be simple. In \cite[Chapter 3]{CozzensFaith} Cozzens and Faith provide an example of
 a simple Ore extension $R[x;\sigma,\delta]$ which is not a differential polynomial ring.  
 
If one has a family of commuting derivations, $\delta_1,\ldots, \delta_n$, one can form a differential polynomial ring in several variables. The articles \cite{Malm, Posner, Voskoglou} consider the question when such 
rings are simple. In 
\cite{Hauger} a class of rings with a definition similar, but not identical to, the defintion of differential polynomial rings of this article, are studied and a characterization of when they are simple is obtained. 

None of the articles cited have studied the simplicity of Ore extensions from the perspective pursued in this article. In particular for differential polynomial rings the connection between maximal commutativity of the
coefficient subring and simplicity of the differential polynomial ring (Theorem \ref{MainResult}) appears to be new,
 as well as the result that the centralizer of the center of the coefficient subring has the ideal intersection property (Proposition \ref{IdealIntersection}).  
We also show that a differential ring is simple if and only if its center is a field and the coefficient subring has no non-trivial ideals invariant under the derivation (Theorem \ref{MainResult2}). In Theorem \ref{SimpleOre} we
note that simple Ore extensions over commutative domains are necessarily differential polynomial rings, and hence can be treated by the preceding characterization.

In Section \ref{sec:defnotOreExt}, we recall some notation and basic facts 
about Ore extension rings used throughout the rest of the article.
In Section \ref{sec:centralizerMaxcom}, we describe the centralizer of the coefficient subring
in general Ore extension rings and then use this description to provide conditions for maximal commutativity of the coefficient subring. These conditions of maximal commutativity of the coefficient subring
are further detailed for two important classes of Ore extensions, the skew polynomial rings and differential polynomial rings
in Subsections \ref{subsec:skewpolringscentermaxcom} and \ref{subsec:Differentialpolringscentermaxcom}.
 In Section \ref{sec:simplicity}, we investigate when an Ore extension ring 
is simple and demonstrate how this is connected to maximal commutativity of the coefficient subring
for differential polynomial rings (Subsection  \ref{subsec:simplicityDifferentialPolrings}).

\section{Ore extensions. Definitions and notations}
\label{sec:defnotOreExt}

Throughout this paper all rings are assumed to be unital and associative, and ring morphisms are assumed to respect multiplicative identity elements.

For general references on Ore extensions, see \cite{Goodearl2004,McConnellRobson,RowenRing1}. For the convenience of the reader,
we recall the definition. Let $R$ be a ring, 
$\sigma : R \to R$ a ring endomorphism (not necessarily injective) and $\delta : R \to R$ a $\sigma$-derivation, i.e.
\begin{displaymath}
	\delta(a+b)=\delta(a)+\delta(b)
	\quad \text{ and } \quad
	\delta(ab) = \sigma(a) \delta(b) + \delta(a)b
\end{displaymath}
for all $a,b\in R$.

\begin{definition}
The Ore extension $\OreGen$ is defined as the ring generated by $R$ and an element $x \notin R$ such that $1,x, x^2, \ldots$ form a basis for $\OreGen$ as a left $R$-module and all $r \in R$ satisfy
\begin{equation}\label{MultiplicationRule}
	x r = \sigma(r)x + \delta(r).
\end{equation}
\end{definition}
Such a ring always exists and is unique up to isomorphism (see \cite{Goodearl2004}). 
Since $\sigma(1)=1$ and $\delta(1 \cdot 1) = \sigma(1) \cdot \delta(1) +\delta(1) \cdot 1$, we get that $\delta(1)=0$ and hence $1_R$ will be a multiplicative identity for $\OreGen$ as well. 

If $\sigma= \identity_R$, then we say that $\DiffPol$ is a \emph{differential polynomial ring}.
 If $\delta \equiv 0$, then we say that $\SkewPol$ is a 
\emph{skew polynomial ring}. The reader should be aware that throughout the literature on Ore extensions the terminology varies.

An arbitrary non-zero element $P \in \OreGen$ can be written uniquely
as $P = \sum_{i=0}^n a_i x^i$ for some $n \in \Z_{\geq 0}$, with $a_i \in R$ for $i \in \{0,1,\ldots, n\}$ and $a_n \neq 0$. The \emph{degree} of $P$ will be defined as $\deg(P):=n$. We set $\deg(0) := -\infty$.

\begin{definition}
 A $\sigma$-derivation $\delta$ is said to be \emph{inner} if there exists some $a \in R$ such that $\delta(r) = ar-\sigma(r)a$ for all $r \in R$. A $\sigma$-derivation that is not inner is called \emph{outer}.
\end{definition}

Given a ring $S$ we denote its center by $Z(S)$ and its characteristic by $\CHAR(S)$.
The \emph{centralizer} of a subset $T\subseteq S$ is defined as the set of elements of $S$ that commute with every element of $T$. If $T$ is a commutative subring of $S$ and the centralizer of $T$ in $S$ coincides with $T$, 
then $T$ is said to be a \emph{maximal commutative} subring of $S$.

\section{The centralizer and maximal commutativity of $R$ in $\OreGen$}
\label{sec:centralizerMaxcom}
In this section we shall describe the centralizer of $R$ in
the Ore extension $\OreGen$ and give conditions for when $R$ is a maximal commutative subring of $\OreGen$. We start by giving a general description of the centralizer and then
derive some consequences in particular cases. \\

\noindent In order to proceed we shall need to introduce some notation. We will define functions $\pi_k^l: R \to R$  for $k,l \in \Z$.
We define $\pi_0^0 = \identity_R$. If $m,n$ are non-negative integers such that $m>n$, or if at least one of $m,n$ is negative, then we define $\pi_m^n \equiv 0$. The remaining cases are defined 
by induction through the formula
\begin{displaymath}
 \pi_m^n= \sigma \circ \pi_{m-1}^{n-1} + \delta \circ \pi_m^{n-1}.
\end{displaymath}
These maps turn out to be useful when it comes to writing expressions in a compact form. We find by a straightforward induction that for all $n \in \Z_{\geq 0}$ and $r \in R$ we may write
\begin{displaymath}
 x^n r = \sum_{m=0}^{n} \pi_m^n(r) x^m.
\end{displaymath}

\begin{prop}\label{CommutantR}
 $\sum_{i=0}^n a_i x^i \in \OreGen$ belongs to the centralizer of $R$ in $\OreGen$ if and only if
\begin{displaymath}
 r a_i = \sum_{j=i}^{n} a_j \pi_i^j(r)
\end{displaymath}
holds for all $i \in \{0,\ldots,n\}$ and all $r \in R$.
\end{prop}

\begin{proof}
For an arbitrary $r\in R$ we have $r \sum_{i=0}^{n} a_i x^i = \sum_{i=0}^{n} r a_i x^i$ and
\begin{eqnarray*}
 \sum_{i=0}^{n} a_i x^i r = \sum_{i=0}^n a_i \sum_{j=0}^{i} \pi_j^i(r) x^j = \sum_{i=0}^{n} \sum_{j=0}^{n} a_i \pi_j^i(r) x^j = \\
\sum_{j=0}^n \sum_{i=0}^{n} a_i \pi_j^i(r) x^j = \sum_{i=0}^n \sum_{j=0}^n a_j \pi_i^j(r) x^i.
\end{eqnarray*}
By equating the expressions for the coefficient in front of $x^i$, for $i\in \{0,\ldots,n\}$, the desired conclusion follows.
\end{proof}

The above description of the centralizer of R holds in a completely general setting. We shall now use it to obtain conditions for when $R$ is a maximal commutative subring of the Ore extension ring.

\begin{remark}
 Note that if $R$ is commutative, then the centralizer of $R$ in $\OreGen$ is also commutative, hence a maximal commutative subring of $\OreGen$. Indeed, take two arbitrary elements $\sum_{i=0}^n c_i x^i$ and $\sum_{j=0}^m d_j x^j$
in the centralizer of $R$ and compute
\begin{align*}
 \left( \sum_{i=0}^n c_i x^i \right) \left( \sum_{j=0}^m d_j x^j \right) = \sum_{j=0}^m d_j \left( \sum_{i=0}^n c_i x^i \right) x^j = \sum_{j=0}^m \sum_{i=0}^n d_j c_i x^{i+j} =\\
\sum_{i=0}^n \sum_{j=0}^m c_i d_j x^j x^i = \sum_{i=0}^n c_i \left( \sum_{j=0}^m d_j x^j \right) x^i = \left( \sum_{j=0}^m d_j x^j\right ) \left( \sum_{i=0}^n c_i x^i \right). 
\end{align*}

\end{remark}

\begin{prop}
\label{MaxCommWithDelta}
Let $R$ be a commutative ring.
If for every $n \in \mathbb{Z}_{>0}$ there is some $r \in R$ such that $\sigma^n(r)-r$ is a regular element,
then $R$ is a maximal commutative subring of $\OreGen$.
In particular, if $R$ is an commutative domain and $\sigma$ is of infinite order, then $R$ is maximal commutative.
\end{prop}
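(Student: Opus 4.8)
The plan is to apply Proposition~\ref{CommutantR} and extract only the equation governing the leading coefficient. First I would record the single fact about the maps $\pi_m^n$ that is actually needed, namely that $\pi_n^n = \sigma^n$ for every $n \in \Z_{\geq 0}$. This follows by a one-line induction from the recursion $\pi_m^n = \sigma \circ \pi_{m-1}^{n-1} + \delta \circ \pi_m^{n-1}$: since $\pi_n^{n-1} \equiv 0$ (because $n > n-1$), the second summand vanishes and $\pi_n^n = \sigma \circ \pi_{n-1}^{n-1}$, with base case $\pi_0^0 = \identity_R$.

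Next, since $R$ is commutative it is certainly contained in its own centralizer in $\OreGen$, so it suffices to prove the reverse inclusion: every element of the centralizer lies in $R$. To that end I would take an arbitrary nonzero $P = \sum_{i=0}^n a_i x^i$ in the centralizer with $a_n \neq 0$ and aim to show $n = 0$. Proposition~\ref{CommutantR} supplies one identity for each index $i$, but I only need the top one, $i = n$, which reads $r a_n = a_n \pi_n^n(r) = a_n \sigma^n(r)$ for all $r \in R$. Using commutativity of $R$ to rewrite $r a_n = a_n r$, this becomes $a_n(\sigma^n(r) - r) = 0$ for all $r \in R$.

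Now I would argue by contradiction. If $n > 0$, the hypothesis furnishes some $r \in R$ for which $\sigma^n(r) - r$ is regular; since a regular element is in particular not a right zero divisor, the relation $a_n(\sigma^n(r)-r) = 0$ forces $a_n = 0$, contradicting $a_n \neq 0$. Hence $n = 0$ and $P = a_0 \in R$, so $R$ equals its centralizer, i.e.\ $R$ is maximal commutative. For the final clause I would simply specialize: when $R$ is a domain every nonzero element is regular, and $\sigma$ having infinite order means $\sigma^n \neq \identity_R$ for each $n > 0$, so there is always some $r$ with $\sigma^n(r) - r \neq 0$; such an element is automatically regular, and the main hypothesis is satisfied.

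I do not anticipate a genuine obstacle: the whole argument collapses onto the leading-coefficient equation, and the lower-degree identities of Proposition~\ref{CommutantR} are never invoked. The only two points demanding a little care are the identification $\pi_n^n = \sigma^n$ and the precise meaning of \emph{regular} (a two-sided non-zero-divisor), which is exactly what licenses the cancellation of $a_n$.
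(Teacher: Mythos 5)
Your proposal is correct and follows essentially the same route as the paper: both apply Proposition~\ref{CommutantR} to the leading coefficient of a hypothetical centralizing element of degree $n>0$, obtain $(\sigma^n(r)-r)a_n=0$ from commutativity, and cancel via regularity of $\sigma^n(r)-r$. Your explicit inductive verification that $\pi_n^n=\sigma^n$ is a detail the paper leaves implicit, but it is the same argument.
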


\begin{proof}
Suppose that $P = \sum_{k=0}^n a_k x^k$ is an element of degree $n>0$ which commutes with every element of $R$.
Let $r$ be an element of $R$ such that $\sigma^n(r)-r$ is regular.
By Proposition~\ref{CommutantR} and the commutativity of $R$, we get that
$r a_n = \sigma^n(r) a_n$ or equivalently $(\sigma^n(r)-r)a_n=0$.
Since $\sigma^n(r)-r$ is regular this implies $a_n=0$, which is a contradiction.
This shows that $R$ is a maximal commutative subring of $\OreGen$.
\end{proof}

\begin{exmp}[The quantum Weyl algebra]\label{qWeyl}
Let $k$ be an arbitrary field of characteristic zero and let $R:=k[y]$ be the polynomial ring in one indeterminate over $k$.

Define $\sigma(y)=qy$ for some $q\in k \setminus \{0,1\}$. Then for any $p(y) \in k[y]$ we have $\sigma(p(y))=p(qy)$ and $\sigma$ is an automorphism of $R$.
Define a map $\delta : R \to R$ by 
\begin{displaymath}
\delta(p(y)) = \frac{\sigma(p(y))-p(y)}{\sigma(y)-y} = \frac{p(qy)-p(y)}{qy-y}                                    
\end{displaymath}
for $p(y) \in k[y]$. One easily checks that $\delta$ is a well-defined $\sigma$-derivation of $R$.
The ring $k[y][x;\sigma, \delta]$ is known as the \emph{$q$-Weyl algebra} or the \emph{$q$-deformed Heisenberg algebra} \cite{HSbook}. If $q$
is not a root of unity, then by Proposition~\ref{MaxCommWithDelta}, $k[y]$ is maximal commutative.
If $q$ is a root of unity of order $n$, then $x^n$ and $y^n$ are central and in particular $R$ is not maximal commutative.
\end{exmp}

\begin{remark}
Example~\ref{WeylAlgebra} demonstrates that infiniteness of the order of $\sigma$ in Proposition~\ref{MaxCommWithDelta} is not a necessary condition for $R$ to be a maximal commutative subring.
\end{remark}

\subsection{Skew polynomial rings}
\label{subsec:skewpolringscentermaxcom}
Many of the formulas simplify considerably if we take $\delta \equiv 0$, and as a consequence we can say more about maximal commutativity of $R$ in $R[x;\sigma,0]$.

\begin{prop} \label{MaxCommSkewPol}
Let $R$ be a commutative domain and $\SkewPol$ a skew polynomial ring.
$R$ is a maximal commutative subring of $\SkewPol$ if and only if
$\sigma$ is of infinite order.
\end{prop}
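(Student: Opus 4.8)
The plan is to prove the two implications separately, taking advantage of the fact that one direction is essentially already available from an earlier result. For the implication that infinite order of $\sigma$ forces $R$ to be maximal commutative, I would simply invoke the second assertion of Proposition~\ref{MaxCommWithDelta}: since $R$ is a commutative domain and $\sigma$ is of infinite order, that proposition already yields that $R$ is a maximal commutative subring of $\OreGen$, and in particular of the skew polynomial ring $\SkewPol$. No further work is required in this direction.

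For the converse I would argue by contrapositive, assuming that $\sigma$ has finite order $n \geq 1$ and exhibiting an element of the centralizer of $R$ lying outside $R$. First I would record how the maps $\pi_m^l$ simplify when $\delta \equiv 0$: the defining recursion $\pi_m^l = \sigma \circ \pi_{m-1}^{l-1} + \delta \circ \pi_m^{l-1}$ collapses to $\pi_m^l = \sigma \circ \pi_{m-1}^{l-1}$, so a routine induction gives $\pi_l^l = \sigma^l$ and $\pi_m^l = 0$ for $m < l$. Consequently the general identity $x^l r = \sum_{m=0}^{l} \pi_m^l(r) x^m$ reduces to the clean commutation rule $x^l r = \sigma^l(r)\, x^l$ for all $r \in R$ and all $l \geq 0$.

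Taking $l = n$, the order of $\sigma$, I would then compute $x^n r = \sigma^n(r)\, x^n = r\, x^n$ for every $r \in R$, so that $x^n$ commutes with all of $R$ and hence belongs to the centralizer of $R$. Since $\deg(x^n) = n \geq 1 > 0$, the element $x^n$ does not lie in $R$, which witnesses that the centralizer of $R$ strictly contains $R$; that is, $R$ is not maximal commutative in $\SkewPol$. Combining this with the forward direction yields the claimed equivalence.

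I do not anticipate a serious obstacle here. The domain hypothesis on $R$ is only used for the forward direction, through Proposition~\ref{MaxCommWithDelta}, whereas the converse works for an arbitrary commutative $R$. The only point requiring minor care is verifying the reduced form of $\pi_m^l$, and hence the rule $x^l r = \sigma^l(r)\, x^l$; but this follows by a straightforward induction from the defining recursion once $\delta$ is set to zero.
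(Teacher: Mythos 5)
Your proof is correct and follows essentially the same route as the paper: the forward direction is the ``in particular'' clause of Proposition~\ref{MaxCommWithDelta}, and the converse exhibits $x^n$ (for $n$ the order of $\sigma$) as a centralizing element outside $R$ via the rule $x^n r = \sigma^n(r)x^n = rx^n$. Your extra verification that the $\pi_m^l$ collapse to give $x^l r = \sigma^l(r)x^l$ when $\delta \equiv 0$ is a detail the paper takes for granted, not a different argument.
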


\begin{proof}
One direction is just a special case of Proposition~\ref{MaxCommWithDelta}. If $n \in \Z_{>0}$ is such that $\sigma^n=\identity_R$,
then the element $x^n$ commutes with each $r \in R$ since
$x^n r = \sigma^n(r) x^n = r x^n$.
\end{proof}

\begin{exmp}[The quantum plane]
With the same notation as in Example~\ref{qWeyl} form the ring $k[y][x;\sigma,0]$. It is known as the \emph{quantum plane}. By Proposition 
\ref{MaxCommSkewPol} $k[y]$ is a maximal commutative subring if and only if $\sigma$ is of infinite order , which is the same as saying
 that $q$ is not a root of unity. 
If $q$ is a root of unity of order $n$ then it is easy to see that $x^n$ and $y^n$ will belong to the center, hence $R$ is not a maximal commutative subring.
\end{exmp}

The following example shows that the conclusion of Proposition~\ref{MaxCommSkewPol}
is no longer valid if one removes the assumption that $R$ is a commutative domain.

\begin{exmp}\label{Ex_MaxComm}
Let $R$ be the ring $\mathbb{Q}^\mathbb{N}$ of functions from the non-negative integers to the rationals. Define $\sigma : R \to R$ such that, for any $f\in R$, we have $\sigma(f)(0)=f(0)$ and $\sigma(f)(n) = f(n-1)$ if $n>0$.
Then $\sigma$ is an injective endomorphism. But $d_0$, the characteristic function of $\{ 0 \}$, satisfies $d_0(n) (\sigma(f)(n)-f(n))=0$ for all $f \in R$ and $n \in \N$. Thus, as in the proof of 
Propostion \ref{MaxCommSkewPol}, it follows that the
element $d_0 x$ of $\SkewPol$ commutes with everything in $R$.
\end{exmp}

\subsection{Differential polynomial rings}
\label{subsec:Differentialpolringscentermaxcom}
We shall now direct our attention to the case when $\sigma=\identity_R$.
The following useful lemma appears in \cite[p. 27]{Goodearl2004}.

\begin{lemma}\label{CommutationRuleDiffRing}
In $\DiffPol$ we have
\begin{displaymath}
	x^n r = \sum_{i=0}^n \binom{n}{i} \delta^{n-i}(r) x^i
\end{displaymath}
for any non-negative integer $n$ and any $r\in R$.
\end{lemma}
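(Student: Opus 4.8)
The statement to prove is Lemma~\ref{CommutationRuleDiffRing}, the commutation rule
\[
x^n r = \sum_{i=0}^n \binom{n}{i} \delta^{n-i}(r) x^i
\]
in the differential polynomial ring $\DiffPol$. I plan to prove this by induction on $n$, using the base case together with the defining multiplication rule \eqref{MultiplicationRule}, which in the differential case ($\sigma = \identity_R$) reads $xr = rx + \delta(r)$.

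For the base case $n=0$ the formula reads $r = \binom{0}{0}\delta^0(r)x^0 = r$, which holds trivially. For the inductive step, I would assume the formula holds for some fixed $n \geq 0$ and compute $x^{n+1}r = x \cdot (x^n r)$ by substituting the inductive hypothesis and then pushing the single factor of $x$ past each coefficient using $x a = a x + \delta(a)$ with $a = \delta^{n-i}(r)$. This yields
\[
x^{n+1} r = \sum_{i=0}^n \binom{n}{i}\bigl(\delta^{n-i}(r)\, x + \delta^{n-i+1}(r)\bigr) x^i
= \sum_{i=0}^n \binom{n}{i}\delta^{n-i}(r)\,x^{i+1} + \sum_{i=0}^n \binom{n}{i}\delta^{n+1-i}(r)\,x^i.
\]
After reindexing the first sum (replacing $i+1$ by $i$), the two sums can be combined over a common range, and the coefficient of each $x^i$ becomes a sum of two binomial coefficients.

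The one genuine step requiring care is the recombination of coefficients: collecting the power $x^i$ in the combined expression produces the factor $\binom{n}{i-1} + \binom{n}{i}$ in front of $\delta^{n+1-i}(r)$, and I would invoke Pascal's rule $\binom{n}{i-1} + \binom{n}{i} = \binom{n+1}{i}$ to obtain exactly $\sum_{i=0}^{n+1}\binom{n+1}{i}\delta^{n+1-i}(r)x^i$, as desired. I should check the boundary terms separately: the term $i=n+1$ (the top power $x^{n+1}$, coefficient $\delta^0(r)=r$) comes only from the reindexed first sum, and the term $i=0$ (coefficient $\delta^{n+1}(r)$) comes only from the second sum, and both match $\binom{n+1}{n+1}=\binom{n+1}{0}=1$. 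This is entirely routine; the main (minor) obstacle is merely bookkeeping the index shift and the edge cases so that Pascal's identity applies uniformly, which is handled by adopting the convention $\binom{n}{-1}=\binom{n}{n+1}=0$. Alternatively, one may observe that this is simply the statement that the maps $\pi_i^n$ defined earlier specialize, when $\sigma = \identity_R$, to $\pi_i^n = \binom{n}{i}\delta^{n-i}$, which itself follows by the same induction from the recursion $\pi_m^n = \sigma\circ\pi_{m-1}^{n-1} + \delta\circ\pi_m^{n-1}$.
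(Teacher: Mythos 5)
Your proof is correct: the induction on $n$ via $xa = ax + \delta(a)$, the index shift, and Pascal's rule (with the convention $\binom{n}{-1}=\binom{n}{n+1}=0$ handling the boundary terms) is exactly the standard argument, and your alternative observation that the recursion $\pi_m^n = \sigma\circ\pi_{m-1}^{n-1} + \delta\circ\pi_m^{n-1}$ specializes to $\pi_i^n = \binom{n}{i}\delta^{n-i}$ when $\sigma=\identity_R$ is also valid. Note that the paper gives no proof of its own here --- it simply cites \cite[p.~27]{Goodearl2004} --- so your argument supplies precisely the routine verification the paper outsources to that reference.
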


We will make frequent reference to the following lemma, which is an easy consequence of Lemma \ref{CommutationRuleDiffRing}.

\begin{lemma} \label{commutantLemma} Let $q= \sum_{i=0}^n q_i x^i \in \DiffPol$ and $r \in Z(R)$. The following assertions hold: 
\begin{enumerate}[{\rm (i)}]
\item if $n=0$, then $rq-qr=0$; 
\item if $n \geq 1$, then $rq-qr$ has degree at most $n-1$ and $(n-1)$:th coefficient $-nq_n \delta(r)$; 
\item $xq-qx= \sum_{i=0}^n \delta(q_i) x^i$.
\end{enumerate}
\end{lemma}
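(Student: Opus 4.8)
The plan is to derive all three assertions from the commutation identity of Lemma~\ref{CommutationRuleDiffRing}, $x^i r = \sum_{j=0}^i \binom{i}{j}\delta^{i-j}(r)x^j$, together with the fact that a central element $r \in Z(R)$ commutes with every coefficient $q_i \in R$. Assertion (i) is immediate: for $n=0$ we have $q=q_0\in R$, and centrality of $r$ gives $rq_0 - q_0 r = 0$.

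For (ii) I would first use centrality to write $rq = \sum_{i=0}^n r q_i x^i = \sum_{i=0}^n q_i r\, x^i$, and then expand $qr = \sum_{i=0}^n q_i (x^i r)$ via Lemma~\ref{CommutationRuleDiffRing}. Separating the leading term $j=i$ in that identity yields $x^i r = r x^i + \sum_{j=0}^{i-1}\binom{i}{j}\delta^{i-j}(r) x^j$, so the $q_i r\,x^i$ parts cancel against $rq$ and one obtains the closed form
\[
rq - qr = -\sum_{i=1}^n q_i \sum_{j=0}^{i-1}\binom{i}{j}\delta^{i-j}(r)\,x^j .
\]
From this the degree bound is read off directly, since the largest exponent occurring is $j=i-1\le n-1$. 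The coefficient of $x^{n-1}$ collects contributions only from pairs with $j=n-1$, which forces $i=n$; that single term equals $-q_n\binom{n}{n-1}\delta(r) = -n q_n \delta(r)$, as claimed.

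Assertion (iii) does not involve $r$ and follows from the defining rule together with $\sigma=\identity_R$, which gives $x q_i = q_i x + \delta(q_i)$. I would compute $xq = \sum_{i=0}^n (x q_i) x^i = \sum_{i=0}^n q_i x^{i+1} + \sum_{i=0}^n \delta(q_i) x^i$, whereas $qx = \sum_{i=0}^n q_i x^{i+1}$; subtracting leaves $xq - qx = \sum_{i=0}^n \delta(q_i) x^i$. No genuine obstacle arises in any part; the only point demanding care is the bookkeeping in (ii), namely checking that no index $i<n$ can reach degree $n-1$ (it reaches at most $x^{i-1}$), so that the $(n-1)$-th coefficient is exactly $-n q_n \delta(r)$ and nothing else contributes.
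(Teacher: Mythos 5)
Your proof is correct and follows precisely the route the paper intends: the paper omits a written proof, remarking only that the lemma is ``an easy consequence of Lemma~\ref{CommutationRuleDiffRing}'', and your argument supplies exactly those details, expanding $qr$ via the commutation formula and isolating the $j=i$ term to cancel against $rq$. The bookkeeping in (ii), including the observation that only the pair $(i,j)=(n,n-1)$ contributes to the $(n-1)$:th coefficient, is accurate, as is the direct computation in (iii) from $xq_i = q_i x + \delta(q_i)$.
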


The following proposition gives some sufficient conditions for $R$ to be a maximal commutative subring of $\DiffPol$. 
Note that in the special case when $R$ is commutative and $\sigma= \identity_R$, an outer derivation is the same as a non-zero derivation. 

\begin{prop}\label{MaxCommutativityDiffRing}
Let $R$ be a commutative domain of characteristic zero.
If the derivation $\delta$ is non-zero, then $R$ is a maximal commutative subring of $\DiffPol$.
\end{prop}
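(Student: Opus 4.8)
The plan is to argue by contradiction, assuming that $R$ is \emph{not} maximal commutative in $\DiffPol$ and deriving a contradiction from the characteristic-zero hypothesis together with $\delta \neq 0$. Since $R$ is commutative, its centralizer contains $R$ and is itself commutative by the Remark, so showing maximal commutativity amounts to showing the centralizer of $R$ contains no element of positive degree. Suppose then that there exists $q = \sum_{i=0}^n q_i x^i$ in the centralizer of $R$ with $n = \deg(q) \geq 1$ and $q_n \neq 0$. The strategy is to extract a contradiction from the top-degree behaviour of the commutators $rq - qr$, exactly the quantities controlled by Lemma~\ref{commutantLemma}.

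The key step is to apply part~(ii) of Lemma~\ref{commutantLemma}. Because $R$ is commutative we have $Z(R) = R$, so for \emph{every} $r \in R$ the element $rq - qr$ must vanish (as $q$ centralizes $R$). By Lemma~\ref{commutantLemma}(ii), for $n \geq 1$ the $(n-1)$:th coefficient of $rq - qr$ equals $-n\, q_n\, \delta(r)$. Forcing the commutator to be zero therefore forces $-n\, q_n\, \delta(r) = 0$ for all $r \in R$. Here the two hypotheses enter decisively: since $\CHAR(R) = 0$, the integer $n \geq 1$ is a unit (more precisely $n \cdot 1_R \neq 0$ and $R$ is a domain, so $n$ is regular), and since $R$ is a domain, the equation $n\, q_n\, \delta(r) = 0$ with $q_n \neq 0$ and $n$ regular yields $\delta(r) = 0$ for all $r \in R$. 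Thus $\delta \equiv 0$, contradicting the assumption that $\delta$ is non-zero.

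I expect the main (and only genuine) obstacle to be verifying that the coefficient formula of Lemma~\ref{commutantLemma}(ii) really does isolate $\delta$ in the way needed, rather than leaving residual lower-degree terms that could accidentally cancel. This is why working with the \emph{top} relevant coefficient — the $(n-1)$:th — is essential: it is the highest-degree coefficient of $rq - qr$ and hence cannot be cancelled by contributions from any higher term. The domain hypothesis is used twice, once to guarantee $n \cdot 1_R$ is regular in characteristic zero and once to pass from a product being zero to a factor being zero; I would make sure both uses are stated explicitly. The remainder is routine: having shown no centralizing element of positive degree exists, the centralizer of $R$ coincides with $R$, which is precisely the assertion that $R$ is maximal commutative in $\DiffPol$.
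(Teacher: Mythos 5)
Your proposal is correct and follows essentially the same route as the paper's proof: assume a centralizing element $q$ of degree $n\geq 1$ exists, apply Lemma~\ref{commutantLemma}(ii) (noting $Z(R)=R$ by commutativity) to read off the $(n-1)$:th coefficient $-nq_n\delta(r)$ of $rq-qr=0$, and then use the domain and characteristic-zero hypotheses to force $\delta\equiv 0$, a contradiction. Your explicit remark that $n\cdot 1_R$ is merely regular rather than a unit is a minor sharpening of wording, not a difference in substance.
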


\begin{proof}
Suppose that $R$ is not a maximal commutative subring of $\DiffPol$.
We want to show that $\delta$ is zero.
By our assumption, there is some $n \in \Z_{>0}$ and some
$q = b x^n + a x^{n-1} + [\text{lower terms}]$ with $a,b \in R$ and $b\neq 0$ such that
$rq-qr=0$ for all $r\in R$.
By Lemma \ref{commutantLemma} and the commutativity of $R$, we get
$ rq -qr =  (-nb\delta(r))x^{n-1} + [\text{lower terms}].$

Hence, for any $r\in R$, $nb\delta(r)=0$ which yields $n\delta(r)=0$ since $R$ is a commutative domain
and $\delta(r)=0$ since $R$ is of characteristic zero.
\end{proof}

\begin{exmp}
 Let $k$ be a field of characteristic $p >0$ and let $R=k[y]$. If we take $\delta$ to be the usual formal derivative, then we note that $x^p$ is a central element in $\DiffPol$. This shows that the assumption on the 
characteristic of $R$ in Proposition~\ref{MaxCommutativityDiffRing} can not be relaxed.  
\end{exmp}

\section{Simplicity conditions for $\OreGen$}
\label{sec:simplicity}
Now we proceed to the  main topic of this article. We investigate when $\OreGen$ is simple and demonstrate how this is related to maximal commutativity of $R$ in $\OreGen$.

In any skew polynomial ring $R[x;\sigma,0]$, the ideal generated by $x$ is proper and hence skew polynomial rings can never be simple. In contrast, there exist simple skew Laurent rings (see e.g. \cite{JordanLaurent}).

\begin{remark}\label{innerderivationnonsimple}
If $\delta$ is an inner derivation, then $\OreGen$ is isomorphic to a skew polynomial ring and hence not simple (see
\cite[Lemma 1.5]{Goodearl92}).
\end{remark}

We are very interested in finding an answer to the following question.
\begin{question}\label{question_injective}
Let $R[x;\sigma,\delta]$ be a general Ore extension ring where $\sigma$ is, a priori, not necessarily injective.
Does the following implication always hold?
\begin{displaymath}
R[x;\sigma,\delta]  \text{ is a simple ring.} \Longrightarrow \sigma \text{ is injective}.
\end{displaymath}
\end{question}
So far, we have not been able to find an answer in the general situation. However, it is clear that the implication holds in the particular case when $\delta(\ker \sigma) \subseteq \ker \sigma$, for example when 
$\sigma$ and $\delta$ commute.
 
The following unpublished partial answer to the question has been communicated by Steven Deprez (see \cite{MathOverflowQuestion}).

\begin{prop}\label{Steven}
Let $R$ be a commutative and reduced ring. If $\OreGen$ is simple, then $\sigma$ is injective. 
\end{prop}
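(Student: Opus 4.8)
The plan is to argue by contraposition: assume $\sigma$ is not injective, fix a nonzero $a\in\ker\sigma$, and manufacture a nonzero proper two-sided ideal of $\OreGen$, contradicting simplicity. The first step is purely computational and uses only commutativity of $R$ together with the $\sigma$-derivation identity. Since $ar=ra$, expanding $\delta(ar)$ and $\delta(ra)$ and cancelling the term $\sigma(a)\delta(r)=0$ gives the key relation
\begin{displaymath}
 \delta(a)\bigl(r-\sigma(r)\bigr)=a\,\delta(r)\qquad\text{for all }r\in R .
\end{displaymath}
Applying $\sigma$ to both sides and using $\sigma(a)=0$ yields $\sigma(\delta(a))\bigl(\sigma(r)-\sigma^2(r)\bigr)=0$ for all $r$, i.e.\ the element $c:=\sigma(\delta(a))$ annihilates $y-\sigma(y)$ for every $y\in\sigma(R)$. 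Setting $y=c\in\sigma(R)$ and iterating, one extracts the relations $c(c-\sigma(c))=0$ and $c^2=c\,\sigma(c)=c\,\sigma^2(c)=\cdots$, which are the algebraic data that reducedness will act on.

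I would then split into two cases according to whether $\ker\sigma$ is $\delta$-invariant. \textbf{Easy case:} if $\delta(\ker\sigma)\subseteq\ker\sigma$, then $\ker\sigma$ is a nonzero proper $(\sigma,\delta)$-invariant ideal of $R$ (proper since $\sigma(1)=1\neq0$), so $(\ker\sigma)\,\OreGen$ is a genuine two-sided ideal: it is closed under left multiplication by $x$ because $x\bigl(\sum c_ix^i\bigr)=\sum\sigma(c_i)x^{i+1}+\sum\delta(c_i)x^i$ has all coefficients in $\ker\sigma$, and it is proper because its elements have constant term in $\ker\sigma$. This contradicts simplicity, and it is exactly the situation the authors already isolate in the remark following Question~\ref{question_injective}. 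The real content is therefore the case $\delta(\ker\sigma)\not\subseteq\ker\sigma$, which genuinely occurs: for $R=\K[y]$ with $\sigma(p)=p(0)$ and $\delta(p)=(p-p(0))/y$ one has $\ker\sigma=(y)$ yet $\delta(y)=1$, and here $xy=\delta(y)=1$ in $\OreGen$, so the offending ideal does \emph{not} come from a $(\sigma,\delta)$-ideal of $R$ at all.

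In that remaining case the target is a nonzero ideal meeting $R$ trivially, equivalently a nonzero but non-faithful $\OreGen$-module; this is where reducedness must be used essentially. Guided by the example above, where $\OreGen$ acts on the ``Laurent'' module making $x$ act invertibly and $1-yx$ act as $0$, I would try to invert the action of $x$ along the regular elements provided by $R/\operatorname{Ann}_R(a)$, on which $a$ becomes a non-zero-divisor (using the reduced-ring fact $\operatorname{Ann}_R(a^n)=\operatorname{Ann}_R(a)$, so that $aR\cap\operatorname{Ann}_R(a)=0$). Reducedness should also upgrade the relations from the first step: from $c(c-\sigma(c))=0$ one hopes to derive $(c-\sigma(c))^2=0$ and hence $c=\sigma(c)$, showing that $\sigma$ acts as the identity modulo $\operatorname{Ann}_R(c)$ and effectively reducing the analysis to a differential-operator situation of the kind the paper treats when $\sigma=\identity_R$.

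The main obstacle is precisely this last construction: producing, from a single nonzero $a\in\ker\sigma$, an explicit positive-degree element of $\OreGen$ that annihilates a nonzero module while no nonzero element of $R$ does. The delicate point is that multiplication by $\sigma$ applied to leading coefficients can \emph{lower} degree once those coefficients lie in $\ker\sigma$ (since $x\,(bx^n)=\delta(b)x^n$ for $b\in\ker\sigma$), so the usual leading-coefficient/degree bookkeeping that proves $(\sigma,\delta)$-ideals are proper breaks down, and one must instead control the infinitely many relations generated by the displayed identity. I expect that verifying the constructed ideal is simultaneously nonzero and proper—i.e.\ that reducedness rules out the nilpotent coincidences (such as $c-\sigma(c)$ failing to be nilpotent) that would otherwise collapse it to $0$ or to all of $\OreGen$—will be the technical heart of the argument.
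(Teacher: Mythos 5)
Your proposal is not a complete proof: by your own admission the case $\delta(\ker\sigma)\not\subseteq\ker\sigma$ --- which is the entire content of the proposition, since the other case follows already from Proposition~\ref{SimpleInvariant} applied to the $\sigma$-$\delta$-invariant ideal $\ker\sigma$ --- is left as a program (``one hopes to derive\dots'', ``I expect\dots'') with no construction of the required ideal. Moreover, even the intermediate algebra you sketch there does not go through: from $c(c-\sigma(c))=0$, i.e.\ $c^2=c\,\sigma(c)$, one computes $(c-\sigma(c))^2=\sigma(c)\bigl(\sigma(c)-c\bigr)$, and none of your relations force this to vanish (applying $\sigma$ to $c(c-\sigma(c))=0$ gives $\sigma(c)\bigl(\sigma(c)-\sigma^2(c)\bigr)=0$, which is a different statement), so the step $c=\sigma(c)$ is not established even granting reducedness.

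The missing idea --- and the paper's entire proof --- is a single torsion construction requiring no case distinction: with $a\in\ker\sigma\setminus\{0\}$, set
\begin{displaymath}
I=\{\,p\in\OreGen \mid \exists\, k\in\Z_{>0}: pa^k=0\,\}.
\end{displaymath}
This is clearly a left ideal and, by commutativity of $R$, a right $R$-module; the crucial point is closure under right multiplication by $x$, where raising the power of $a$ by one absorbs the $\delta$-term: if $pa^k=0$, then $(px)a^{k+1}=p(\sigma(a)x+\delta(a))a^k=p\,\delta(a)\,a^k=p\,a^k\,\delta(a)=0$. The ideal is nonzero because $(ax-\delta(a))\,a=a\,\delta(a)-\delta(a)\,a=0$, so $ax-\delta(a)\in I$, and reducedness enters exactly once: $a$ is not nilpotent, hence $1\notin I$ and $I$ is proper, contradicting simplicity. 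Note also that your stated target --- a nonzero ideal meeting $R$ trivially --- was a harder goal than necessary: this $I$ may well contain nonzero annihilators of $a$ in $R$, and properness is certified simply by $1\notin I$ rather than by the leading-coefficient bookkeeping you correctly identified as breaking down when coefficients lie in $\ker\sigma$.
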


\begin{proof}
 Suppose that $\sigma$ is not injective. Take $a \in \ker(\sigma) \setminus \{ 0\}$. By assumption $a^k \neq 0$ for all $k \in \Z_{>0}$. 
Define $I= \{ p \in \OreGen \mid \exists k \in \Z_{>0} : pa^k =0 \}.$ It is clear that $I$ is a left ideal of $\OreGen$, and a right $R$-module. It is non-zero since it contains $ax-\delta(a)$. Since $a$ is not nilpotent, $I$ does 
not contain $1$. If we show that $I$ is closed under right multiplication by $x$, then we have shown that it is a non-trivial ideal of $\OreGen$. Take any $p \in I$ and $k$ such that $pa^k=0$. We compute
\begin{displaymath}
 (px)a^{k+1} = p(xa)a^k = p(\sigma(a)x+\delta(a))a^k = p\delta(a)a^k=0.  
\end{displaymath}
This shows that $px \in I$.
\end{proof}

Lemma 1.3 in \cite{Goodearl92} implies as a special case the following.
\begin{lemma}\label{SigmaDeltaExtension}
 If $R$ is a commutative domain, $k$ its field of fractions,  $\sigma$ an injective endomorphism of $R$ and $\delta$ a $\sigma$-derivation of $R$, then $\sigma$ and $\delta$ extends uniquely to $k$ as an injective endomorphism,
respectively a $\sigma$-derivation.
\end{lemma}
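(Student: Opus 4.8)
The plan is to use the universal property of localization, extending each map by the only formula compatible with the relevant axioms, so that existence and uniqueness are dealt with together. Throughout I write $\overline{\sigma}$ and $\overline{\delta}$ for the sought extensions and use that $k$ is commutative.

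First I would extend $\sigma$. Since $R$ is a domain and $\sigma$ is injective, $\sigma(b) \neq 0$ for every nonzero $b \in R$, so $\sigma(b)$ is invertible in $k$. Any field homomorphism $\overline{\sigma} \colon k \to k$ restricting to $\sigma$ on $R$ must send $b$ to $\sigma(b)$ and preserve inverses, hence must satisfy $\overline{\sigma}(a/b) = \sigma(a)\sigma(b)^{-1}$; this forces uniqueness. For existence I would define $\overline{\sigma}(a/b) := \sigma(a)\sigma(b)^{-1}$ and check that it is well defined (if $ad = bc$, then $\sigma(a)\sigma(d) = \sigma(b)\sigma(c)$, so the two fractions agree), additive, multiplicative, and unital, all of which are routine. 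Injectivity is immediate: $\overline{\sigma}(a/b) = 0$ forces $\sigma(a) = 0$, hence $a = 0$.

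Next I would extend $\delta$, now as an $\overline{\sigma}$-derivation. The $\overline{\sigma}$-Leibniz rule already pins down the extension: applying it to $1 = b\,b^{-1}$ yields $\overline{\delta}(b^{-1}) = -\sigma(b)^{-1}\delta(b)\,b^{-1}$, and then applying it to $a/b = a\,b^{-1}$ gives the quotient rule
\[
\overline{\delta}(a/b) = \frac{\sigma(b)\,\delta(a) - \sigma(a)\,\delta(b)}{\sigma(b)\,b}.
\]
Since this formula is forced, any extension is unique. For existence I would take it as the definition and verify two things: that it is independent of the chosen representative of $a/b$, and that it is an additive $\overline{\sigma}$-derivation on $k$.

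The main obstacle is the well-definedness of $\overline{\delta}$: one must show that if $ad = bc$, then the right-hand side above computed from $(a,b)$ equals that computed from $(c,d)$. I would handle this by applying $\delta$ to the relation $ad = bc$ to obtain $\sigma(a)\delta(d) + \delta(a)d = \sigma(b)\delta(c) + \delta(b)c$, substitute into the difference of the two quotient expressions, clear denominators, and use commutativity of $k$ together with this relation to collapse the difference to zero. Verification of the Leibniz rule $\overline{\delta}(xy) = \overline{\sigma}(x)\overline{\delta}(y) + \overline{\delta}(x)\,y$ for $x = a/b$ and $y = c/d$, as well as additivity, is then a similar mechanical computation directly from the quotient rule.
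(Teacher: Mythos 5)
Your proof is correct, but it takes a genuinely different route from the paper, which in fact contains no proof of this lemma at all: the paper obtains it as a special case of Lemma 1.3 of \cite{Goodearl92}, where the extension of $\sigma$ and $\delta$ to a localization is established in a more general, not-necessarily-commutative setting. Your argument is the elementary, self-contained alternative: the formulas $\overline{\sigma}(a/b)=\sigma(a)\sigma(b)^{-1}$ and
\[
\overline{\delta}(a/b)=\frac{\sigma(b)\delta(a)-\sigma(a)\delta(b)}{\sigma(b)\,b}
\]
are forced by the axioms (giving uniqueness) and then verified directly (giving existence); this buys transparency and independence from the literature at the cost of computation, while the paper's citation buys brevity and generality. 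One step you dismiss as ``mechanical'' deserves a flag, since it is the only place where something nontrivial happens: if you clear denominators directly from $ad=bc$, then after using $\sigma(a)\sigma(d)=\sigma(b)\sigma(c)$ and the differentiated relation $\sigma(a)\delta(d)+\delta(a)d=\sigma(b)\delta(c)+\delta(b)c$, the difference of the two quotient expressions does not collapse to zero on its own; it collapses to
\[
\sigma(d)\,d\,\bigl((\sigma(b)-b)\delta(a)-(\sigma(a)-a)\delta(b)\bigr),
\]
so you additionally need the identity $(\sigma(r)-r)\delta(s)=(\sigma(s)-s)\delta(r)$ for all $r,s\in R$, which follows from applying $\delta$ to both sides of $rs=sr$ --- incidentally the very identity that drives the paper's proof of Theorem \ref{SimpleOre}. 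This is consistent with your phrase ``use commutativity of $k$,'' but note that commutativity must be fed through $\delta$, not merely used to rearrange factors. A cleaner variant: over a domain, two representatives $(a,b)$ and $(c,d)$ of the same fraction both refine to the common representative $(ad,bd)$, so it suffices to check $\overline{\delta}$ agrees on $(a,b)$ and $(at,bt)$ for $t\neq 0$, which really is a one-line Leibniz computation. Finally, two small simplifications: injectivity of $\overline{\sigma}$ is automatic, since any unital ring endomorphism of a field is injective, and your uniqueness argument for $\overline{\delta}$ correctly rests on the already-established uniqueness of $\overline{\sigma}$.
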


Using Proposition \ref{Steven} we are able to generalize a result proved by Bavula in \cite{Bavula}, using his technique. 

\begin{theorem}\label{SimpleOre}
If $R$ is a commutative domain and $\OreGen$ is a simple ring, then $\sigma =\identity_R$.
\end{theorem}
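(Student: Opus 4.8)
The plan is to argue by contradiction, passing first to the field of fractions and then using the multiplication rule $xr=\sigma(r)x+\delta(r)$ to manufacture a proper non-zero ideal. Since a commutative domain is reduced, Proposition~\ref{Steven} already forces $\sigma$ to be injective, so the whole content lies in upgrading ``injective'' to ``identity''. Suppose therefore that $\sigma\neq\identity_R$ and fix $a\in R$ with $b:=\sigma(a)-a\neq 0$. Using Lemma~\ref{SigmaDeltaExtension} I would extend $\sigma$ and $\delta$ to the field of fractions $k$ of $R$, so that $S:=\OreGen$ embeds as a subring of $\tilde S:=k[x;\sigma,\delta]$ (the standard basis $1,x,x^2,\dots$ of $S$ maps into a $k$-basis of $\tilde S$, which also gives injectivity). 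The payoff of the field passage is that $b$ is now invertible in $k$.

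The heart of the argument, which is Bavula's technique, is to show that over the field $k$ the hypothesis $\sigma\neq\identity$ forces $\delta$ to be \emph{inner}. I would set $c:=-b^{-1}\delta(a)$ and consider the inner $\sigma$-derivation $\delta_c(r):=cr-\sigma(r)c$; a one-line check gives $\delta_c(a)=c(a-\sigma(a))=-cb=\delta(a)$. I then look at $\epsilon:=\delta-\delta_c$, a $\sigma$-derivation with $\epsilon(a)=0$, and apply it to the identity $ar=ra$: from $\epsilon(ar)=\sigma(a)\epsilon(r)+\epsilon(a)r=\sigma(a)\epsilon(r)$ and $\epsilon(ra)=\sigma(r)\epsilon(a)+\epsilon(r)a=\epsilon(r)a$, commutativity of $k$ yields $(\sigma(a)-a)\epsilon(r)=b\,\epsilon(r)=0$, so $\epsilon(r)=0$ for every $r\in k$. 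Hence $\delta=\delta_c$ is inner over $k$. This is the step I expect to be the main obstacle, and it is precisely where both the field passage (so that $b$ is invertible and $c$ exists) and the commutativity of $R$ are indispensable.

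Granting inner-ness, I would change variables to $v:=x-c$, for which the multiplication rule collapses to $vr=\sigma(r)v$ for all $r\in k$; equivalently $\tilde S=k[v;\sigma,0]$ is a skew polynomial ring, which by Remark~\ref{innerderivationnonsimple} is never simple. Concretely the two-sided ideal $(v)$ of $\tilde S$ is proper, since pushing every $v$ to the right via $v r=\sigma(r)v$ shows that each element of $(v)=\tilde S v\tilde S$ has zero constant term, so $1\notin(v)$.

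It remains to descend the ideal back to $S$, and this is the point needing a little care. I would observe that $bv=bx-bc=bx+\delta(a)=xa-ax$ lies in $S$ and is non-zero, its leading coefficient being $b$. Since $b\in k$, the element $bv$ also lies in $(v)$, so $(v)\cap S$ is a two-sided ideal of $S$ that contains the non-zero element $xa-ax$ but not $1$. This proper non-zero ideal contradicts the simplicity of $S$, and therefore $\sigma=\identity_R$. The only subtlety in the descent is the non-vanishing of $(v)\cap S$, which is settled at once by exhibiting the explicit element $xa-ax$.
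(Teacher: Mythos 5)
Your proof is correct and takes essentially the same route as the paper's: injectivity via Proposition~\ref{Steven}, extension to the field of fractions via Lemma~\ref{SigmaDeltaExtension}, the computation showing $\delta$ is inner over $k$ (your $\epsilon$-argument is exactly the paper's identity $\delta(\alpha\beta)=\delta(\beta\alpha)$ repackaged), and non-simplicity of the resulting skew polynomial ring pulled back to $\OreGen$. Your only departure is to spell out what the paper dismisses with ``one can easily check'': the explicit change of variables $v=x-c$, the proper ideal generated by $v$, and the non-zero element $xa-ax=bv$ witnessing that the intersected ideal is non-trivial --- a detail worth making explicit, since properness alone would not contradict simplicity.
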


\begin{proof}
By Proposition \ref{Steven}, $\sigma$ must be injective. 

Let $k$ be the field of fractions of $R$. By Lemma~\ref{SigmaDeltaExtension}, $\sigma$ and $\delta$ extend uniquely to $k$. $\OreGen$ can be seen as a subring of $k[x;\sigma,\delta]$. If $\sigma \neq \identity_R$, 
then there is some
$\alpha \in R$ such that $\sigma(\alpha)-\alpha \neq 0$.
For every $\beta \in k$ we have $\delta(\alpha \beta) = \delta(\beta \alpha)$. Hence, for every $\beta \in k$ the following three equivalent identities hold.  
\begin{eqnarray*}
 \sigma(\alpha) \delta(\beta) +\delta(\alpha)\beta = \sigma(\beta)\delta(\alpha)+\delta(\beta)\alpha \Leftrightarrow (\sigma(\alpha)-\alpha)\delta(\beta) = (\sigma(\beta)-\beta)\delta(\alpha) \\
 \Leftrightarrow \delta(\beta) = \frac{\delta(\alpha)}{\sigma(\alpha)-\alpha}(\sigma(\beta)-\beta).
\end{eqnarray*}
Hence $\delta$ is an inner $\sigma$-derivation. This implies that $k[x;\sigma,\delta]$ is not simple since it is isomorphic to a skew polynomial ring.
Letting $I$ be a proper ideal of $k[x;\sigma,\delta]$ one can easily check that $I \cap \OreGen$ is a proper ideal of $\OreGen$, which is a
contradiction.
\end{proof}

An example in \cite[Chapter 3]{CozzensFaith} shows that Theorem \ref{SimpleOre} need not hold if $R$ is a non-commutative domain.

\begin{definition}
An ideal $J$ of $R$ is said to be \emph{$\sigma$-$\delta$-invariant} if $\sigma(J)\subseteq J$ and $\delta(J)\subseteq J$.
If $\{0\}$ and $R$ are the only $\sigma$-$\delta$-invariant ideals of $R$, then $R$ is said to be \emph{$\sigma$-$\delta$-simple}.
\end{definition}

The following necessary condition for $\OreGen$ to be simple is presumably well-known but we have not been able to find it in the existing literature. For the convenience of the reader, we provide a proof.

\begin{prop}\label{SimpleInvariant}
If $\OreGen$ is simple, then $R$ is $\sigma$-$\delta$-simple.
\end{prop}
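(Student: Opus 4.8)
The plan is to prove the statement constructively: starting from any $\sigma$-$\delta$-invariant ideal $J$ of $R$, I build a two-sided ideal of $\OreGen$ out of it and then invoke simplicity. Concretely, let $J$ be an ideal of $R$ with $\sigma(J)\subseteq J$ and $\delta(J)\subseteq J$, and let $I$ denote the set of all $P=\sum_{i=0}^n a_i x^i \in \OreGen$ whose coefficients $a_i$ all lie in $J$. Since $1,x,x^2,\ldots$ form a left $R$-basis, $I$ is a well-defined additive subgroup, and the whole argument reduces to showing that $I$ is a two-sided ideal. Simplicity of $\OreGen$ then forces $I=\{0\}$ or $I=\OreGen$, from which I read off $J=\{0\}$ or $J=R$ by inspecting the degree-zero coefficient, using that $1\in I$ if and only if $1\in J$.

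First I would check that $I$ is a left ideal. Because $J$ is a left ideal of $R$, left multiplication by an element of $R$ keeps every coefficient inside $J$. For multiplication by $x$, the defining relation $xr=\sigma(r)x+\delta(r)$ gives $x\big(\sum a_i x^i\big)=\sum \sigma(a_i) x^{i+1}+\sum \delta(a_i) x^i$, and here the $\sigma$-$\delta$-invariance of $J$ is exactly what guarantees that $\sigma(a_i),\delta(a_i)\in J$, so the result lies again in $I$. Since $R$ and $x$ generate $\OreGen$, an easy induction extends this to left multiplication by an arbitrary element of $\OreGen$.

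Next I would check the right-ideal property. Closure under right multiplication by $x$ is immediate, since it merely shifts the coefficients. For right multiplication by $r\in R$ I would use the commutation formula $x^i r=\sum_{m=0}^i \pi_m^i(r) x^m$ established just before Proposition~\ref{CommutantR}: then $\big(\sum a_i x^i\big) r=\sum_{i,m} a_i \pi_m^i(r) x^m$, and since $J$ is a \emph{right} ideal of $R$ each coefficient $a_i \pi_m^i(r)$ lies in $J$, so the product is in $I$. Note that this direction does not use invariance of $J$; the invariance hypotheses are needed only for closure under left multiplication by $x$. Combining the two halves shows that $I$ is a two-sided ideal of $\OreGen$, and the conclusion follows as described above.

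The main obstacle is really just the bookkeeping in the right-ideal verification: one must ensure that pushing $r$ past all powers of $x$ keeps the coefficients inside $J$, which is precisely where the commutation formula and the two-sided nature of $J$ enter. Everything else is a direct translation between invariance of $J$ in $R$ and the ideal property of $I$ in $\OreGen$, so I expect the argument to be short once $I$ is identified as the right object.
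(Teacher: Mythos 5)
Your proof is correct and takes essentially the same approach as the paper: both turn a non-trivial $\sigma$-$\delta$-invariant ideal $J$ into the same two-sided ideal of $\OreGen$ (your coefficient-wise $I=\{\sum_i a_i x^i : a_i \in J\}$ coincides with the paper's $I=J\cdot\OreGen$), with invariance used exactly where you use it, namely for left multiplication by $x$, and simplicity plus inspection of the constant coefficient finishing the argument. The only cosmetic difference is bookkeeping: the paper's description as $J\cdot\OreGen$ makes the right-ideal property automatic, whereas your description makes left multiplication by $R$ automatic and instead verifies right multiplication by $r$ via the $\pi_m^i$ commutation formula.
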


\begin{proof}
Suppose that $R$ is not $\sigma$-$\delta$-simple and let $J$ be a non-trivial
$\sigma$-$\delta$-invariant ideal of $R$. Let $A=\OreGen$.
Consider the set $I=JA$ consisting of finite sums of elements of the form
$ja$ where $j\in J$ and $a\in A$. We claim that $I$ is a non-trivial ideal of $A$,
and therefore $\OreGen$ is not simple;

Indeed, $I$ is clearly a right ideal of $A$, but it is also a left ideal of $A$.
To see this, note that
for any $r\in R$, $j\in J$ and $a\in A$ we have $rja \in I$
and by the $\sigma$-$\delta$-invariance of $J$ we conclude that
$xja = \sigma(j)xa+\delta(j)a \in I$.
By repeating this argument we conclude that $I$ is a two-sided ideal of $A$.
Furthermore, $I$ is non-zero, since $A$ is unital and $J$ is non-zero,
and it is proper; otherwise we would have
$1=\sum_{i=0}^n j_i a_i$ for some $n\in \Z_{\geq 0}$, $j_i \in J$ and $a_i \in R$ for $i\in\{0,\ldots,n\}$,
which implies that $1 \in J$ and this is a contradiction.
\end{proof}

\begin{exmp}
One can always, even if $\OreGen$ is simple, find a (non-zero) left ideal $I$ of $\OreGen$ such that $I\cap R = \{0\}$.
Take some $n \in \Z_{>0}$ and let $I$ be the left ideal generated by $1-x^n$. This left ideal clearly has the desired property.
\end{exmp}

\begin{remark}\label{CenterField}
Recall that the center of a simple ring is a field.
\end{remark}

\begin{lemma}\label{CenterIntersection2}
 If $R$ is a domain and $\delta$ is non-zero, then $r \in R$ belongs to $Z(\OreGen)$ if and only if the following two assertions hold:

\begin{enumerate}[{\rm (i)}]
    \item $\delta(r)=0$;
    \item $r \in Z(R)$.
\end{enumerate}
\end{lemma}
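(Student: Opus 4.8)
The plan is to characterize when a \emph{constant} element $r \in R$ sits in the center of the whole Ore extension $\OreGen$, in the case $\sigma = \identity_R$ (note the statement concerns $\delta \neq 0$, and the relevant commutation machinery is Lemma~\ref{commutantLemma}, which is stated only for $\DiffPol$; I will assume this is the intended setting). One direction is trivial: if $r \in R$ lies in $Z(\OreGen)$, then in particular $r$ commutes with every element of $R$, and since $R \subseteq \OreGen$ this forces $r \in Z(R)$; moreover $r$ must commute with $x$, and by Lemma~\ref{commutantLemma}(iii) we have $xr - rx = \delta(r)$ (the degree-zero coefficient), so $\delta(r) = 0$. Thus (i) and (ii) are necessary without needing the domain hypothesis at all.

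For the converse, suppose $r \in Z(R)$ with $\delta(r) = 0$; I want to show $r$ commutes with an arbitrary $P = \sum_{i=0}^n a_i x^i \in \DiffPol$. Since $r$ is central in $R$ and commuting is linear, it suffices to show $r$ commutes with each monomial $a x^i$, and since $r a = a r$ (as $r \in Z(R)$), this reduces to showing $r$ commutes with $x^i$ for every $i$. First I would establish that $r x = x r$: by Lemma~\ref{commutantLemma}(iii) applied to $q = r$, we get $x r - r x = \delta(r) = 0$. Then $r$ commutes with every power $x^i$ by an immediate induction: $r x^{i} = (r x) x^{i-1} = (x r) x^{i-1} = x (r x^{i-1}) = x (x^{i-1} r) = x^i r$. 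Assembling these gives $rP = Pr$, so $r \in Z(\OreGen)$.

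The role of the hypotheses deserves comment, and locating exactly where ``$R$ is a domain'' and ``$\delta \neq 0$'' are actually used is the main subtlety. In the argument above the converse direction used neither hypothesis, and the necessity of $\delta(r) = 0$ and $r \in Z(R)$ also went through by direct computation from Lemma~\ref{commutantLemma}. So on the face of it the lemma holds with no hypotheses on $R$ or $\delta$; I suspect the domain and nonvanishing assumptions are present either to match the standing conventions of the section or because the authors prove a sharper statement (characterizing \emph{all} of $Z(\OreGen)$, not just its intersection with $R$) and then extract this as a corollary, where those hypotheses genuinely matter. The one place care is needed is reading off the coefficient in Lemma~\ref{commutantLemma}(ii): for a general constant $r$ one invokes part (iii) rather than (ii), so I would make sure to cite part (iii) for the $[x,r]$ computation and avoid accidentally pulling in a factor of $n$ or a regularity requirement. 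That bookkeeping — using the right clause of Lemma~\ref{commutantLemma} and confirming the hypotheses are not secretly needed — is the only real obstacle; the computation itself is routine.
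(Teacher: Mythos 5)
There is a genuine gap: you have proved a special case of the lemma, not the lemma itself. The statement concerns a general Ore extension $\OreGen$, with arbitrary endomorphism $\sigma$ --- note that it is used later in Proposition~\ref{SimpleCenter}, which treats $\OreGen$ with $\sigma$ injective, so the general setting is certainly intended and your decision to ``assume $\sigma=\identity_R$'' discards the entire content of the statement. In $\OreGen$ the commutation rule is $xr = \sigma(r)x + \delta(r)$, so for $r\in R$ to commute with $x$ one needs \emph{both} $\delta(r)=0$ and $\sigma(r)=r$; conditions (i) and (ii) say nothing directly about $\sigma(r)$, and the whole point of the lemma is that they nevertheless force $\sigma(r)=r$ under the stated hypotheses. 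This is exactly where ``$R$ is a domain'' and ``$\delta\neq 0$'' enter, so your suspicion that these hypotheses are idle should have been a warning sign that you had misread the setting: in your restricted case they are indeed never used, precisely because you removed the one nontrivial step.

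The paper's argument for sufficiency runs as follows. Pick $b\in R$ with $\delta(b)\neq 0$. Since $r\in Z(R)$ we have $\delta(rb)=\delta(br)$, and the $\sigma$-Leibniz rule $\delta(ab)=\sigma(a)\delta(b)+\delta(a)b$ together with $\delta(r)=0$ gives
\begin{align*}
\delta(rb) &= \sigma(r)\delta(b)+\delta(r)b = \sigma(r)\delta(b),\\
\delta(br) &= \sigma(b)\delta(r)+\delta(b)r = \delta(b)r,
\end{align*}
whence $(\sigma(r)-r)\delta(b)=0$ (using $r\in Z(R)$ to write $\delta(b)r = r\delta(b)$), and since $R$ is a domain and $\delta(b)\neq 0$ we conclude $\sigma(r)=r$. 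Then $xr=\sigma(r)x+\delta(r)=rx$, and since $r$ commutes with $R$ and with $x$, which generate $\OreGen$, we get $r\in Z(\OreGen)$. Your forward direction survives in the general setting with a small correction (comparing coefficients in $xr=\sigma(r)x+\delta(r)=rx$ against the left $R$-basis $1,x,x^2,\dots$ gives both $\delta(r)=0$ and $\sigma(r)=r$), but you cannot invoke Lemma~\ref{commutantLemma} at all here, since that lemma is genuinely restricted to $\DiffPol$.
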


\begin{proof}
Since every element of $Z(\OreGen)$ commutes with $x$ and with each $r' \in R$, it is clear that conditions (i) and (ii) are necessary for $r \in R$ to belong to $Z(\OreGen)$. 
We also see that they are sufficient if they imply that $\sigma(r)=r$. 

Now, suppose that (i) and (ii) hold. Since $\delta$ is non-zero there is some $b$ such that $\delta(b) \neq 0$. We compute $\delta(rb)$ and $\delta(br)$ which must be equal since $r \in Z(R)$. A calculation yields
\begin{align*}
 \delta(br) =& \sigma(b) \delta(r)+\delta(b) r = r \delta(b), \\
 \delta(rb) =& \sigma(r) \delta(b) +\delta(r) b = \sigma(r) \delta(b).
\end{align*}
So $(\sigma(r)-r)\delta(b) =0$. This implies that $\sigma(r)=r$.
\end{proof}

\begin{prop}\label{SimpleCenter}
Let $R$ be a domain  and $\sigma$ injective. The following assertions hold:
\begin{enumerate}[{\rm (i)}]
	\item $\OreGen \setminus R$ contains no invertible element;
	\item if $\OreGen$ is simple, then the center of $\OreGen$ is contained in $R$
	and consists of those $r \in Z(R)$ such that $\delta(r)=0$.
\end{enumerate}
\end{prop}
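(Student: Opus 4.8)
The plan is to prove the two assertions in order, using the degree function as the main tool.

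For assertion (i), I would argue by degrees. Suppose $P \in \OreGen$ is invertible with inverse $Q$, so $PQ = QP = 1$. The key observation is that since $\sigma$ is injective and $R$ is a domain, the leading coefficients multiply predictably: if $P = \sum_{i=0}^n a_i x^i$ with $a_n \neq 0$ and $Q = \sum_{j=0}^m b_j x^j$ with $b_m \neq 0$, then the top coefficient of $PQ$ is $a_n \sigma^n(b_m) x^{n+m}$. Because $\sigma$ is injective, $\sigma^n(b_m) \neq 0$, and since $R$ is a domain $a_n \sigma^n(b_m) \neq 0$; hence $\deg(PQ) = \deg(P) + \deg(Q) = n + m$. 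As $PQ = 1$ has degree $0$, we conclude $n = m = 0$, so $P \in R$. This shows no invertible element lies in $\OreGen \setminus R$.

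For assertion (ii), assume $\OreGen$ is simple. First I would dispose of the degenerate case: if $\delta = 0$ then $\OreGen = \SkewPol$ is a skew polynomial ring, which is never simple (the ideal generated by $x$ is proper, as noted in the text), so simplicity forces $\delta \neq 0$. Now take any central element $z = \sum_{i=0}^n c_i x^i \in Z(\OreGen)$. I want to show $n = 0$, i.e. $z \in R$. Since $z$ commutes with every $r \in Z(R)$, I would like to extract a degree contradiction analogous to Lemma~\ref{commutantLemma}(ii); the obstacle is that that lemma is stated only for differential polynomial rings ($\sigma = \identity_R$), whereas here $\sigma$ is a general injective endomorphism. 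So instead I would use centrality directly: $z$ commutes with $x$, which by the multiplication rule constrains the $c_i$, and $z$ commutes with each $r \in R$, which via Proposition~\ref{CommutantR} gives $r c_n = c_n \pi_n^n(r) = c_n \sigma^n(r)$ for the top coefficient. Since $R$ is a domain and $z$ is assumed of positive degree with $c_n \neq 0$, one derives $r = \sigma^n(r)$ for all $r$; combined with the commutation with $x$ (which forces relations involving $\delta$ on the $c_i$) and the fact that $\delta \neq 0$, I expect this to collapse the degree to $0$.

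Once $z \in R$ is established, the characterization follows immediately from Lemma~\ref{CenterIntersection2}: since $R$ is a domain and $\delta \neq 0$, an element $r \in R$ lies in $Z(\OreGen)$ precisely when $\delta(r) = 0$ and $r \in Z(R)$. The main obstacle I anticipate is the degree-reduction step in (ii), namely ruling out central elements of positive degree when $\sigma$ is merely injective rather than the identity; here the interplay between the $\sigma^n(r) = r$ relation from commuting with $R$ and the $\delta$-relations from commuting with $x$ must be combined carefully, using that $\delta$ is non-zero and $R$ is a domain to force a contradiction. The invertibility argument in (i) is routine by comparison.
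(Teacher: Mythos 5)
Your part (i) is correct and is essentially the argument the paper invokes by citing McConnell--Robson: with $\sigma$ injective and $R$ a domain, leading coefficients multiply as $a_n\sigma^n(b_m)\neq 0$, degrees add, and $PQ=1$ forces degree zero. In part (ii), however, there is a genuine gap: the crucial degree-collapse is left as ``I expect this to collapse the degree to $0$,'' and the route you sketch cannot be completed. First, a local flaw: from $rc_n=c_n\sigma^n(r)$ (via Proposition~\ref{CommutantR}) you cannot cancel to conclude $\sigma^n(r)=r$, since $R$ is only assumed to be a domain, not commutative. More seriously, the ingredients you permit yourself --- $\delta\neq 0$, $R$ a domain, $\sigma$ injective, plus all the relations coming from commuting with $R$ and with $x$ --- genuinely do not force central elements to have degree zero. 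Take $R=\F_p[y]$, $\sigma=\identity_R$, $\delta=d/dy$: then $\delta^p=0$ on $R$ and $\binom{p}{i}\equiv 0 \pmod p$ for $0<i<p$, so $x^p$ is a central element of degree $p>0$, even though $\delta\neq 0$ and $R$ is a commutative domain. Of course this $\DiffPol$ is not simple, but your sketch uses simplicity \emph{only} to conclude $\delta\neq 0$ (which you did correctly, via the skew polynomial observation), and the example satisfies every other condition in your plan, so no contradiction can emerge from those relations alone.

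The missing idea --- which is the entire content of the paper's two-line proof of (ii) --- is to use simplicity a second time: the center of a simple ring is a field (Remark~\ref{CenterField}), so every non-zero element of $Z(\OreGen)$ is invertible, and your own part (i) then places it in $R$. This yields $Z(\OreGen)\subseteq R$ with no coefficient analysis whatsoever, after which Lemma~\ref{CenterIntersection2} (applicable since $\delta\neq 0$) identifies the center as $\{r\in Z(R) : \delta(r)=0\}$, exactly as claimed. It is a little ironic that you proved precisely the tool needed --- assertion (i) --- and then did not apply it in (ii); once you notice that central elements of a simple ring are invertible, the hard step of your sketch disappears entirely.
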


\begin{proof}
(i) This follows by the same argument as in \cite[Theorem 1.2.9(i)]{McConnellRobson}.
(ii) This follows from (i), Remark \ref{CenterField} and Lemma \ref{CenterIntersection2} since $\delta$ must be non-zero.
\end{proof}

\subsection{Differential polynomial rings}
\label{subsec:simplicityDifferentialPolrings}
We shall now focus on the case when $\sigma = \identity_R$.

Note that for a derivation $\delta$ on $R$ we have the \emph{Leibniz rule}:
\begin{displaymath}
	\delta^n(rs)=\sum_{i=0}^n \binom{n}{i} \delta^{n-i}(r) \delta^i(s)
\end{displaymath}
for $n\in \Z_{\geq 0}$ and $r,s\in R$.

\begin{prop}\label{IdealIntersection}
$I \cap Z(R)' \neq \{ 0\}$ holds for any non-zero ideal $I$ of $\DiffPol$, where $Z(R)'$ denotes the centralizer of $Z(R)$ in $\DiffPol$.
\end{prop}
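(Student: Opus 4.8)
The plan is to exploit the degree-lowering effect of commuting a polynomial with a central coefficient, as recorded in Lemma~\ref{commutantLemma}, combined with a minimal-degree argument inside the ideal. Given a non-zero ideal $I$ of $\DiffPol$, I would first choose a non-zero element $p = \sum_{i=0}^n p_i x^i$ of $I$ whose degree $n$ is minimal among all non-zero elements of $I$; such an element exists because the degrees of the non-zero elements of $I$ form a non-empty set of non-negative integers. The goal is then to show that this single $p$ already belongs to $Z(R)'$, which immediately produces a non-zero element of $I \cap Z(R)'$.

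To carry this out, I would fix an arbitrary $r \in Z(R)$ and examine the commutator $rp - pr$. Since $r \in R \subseteq \DiffPol$ and $I$ is a two-sided ideal, both $rp$ and $pr$ lie in $I$, so $rp - pr \in I$ as well. Now Lemma~\ref{commutantLemma} controls this commutator: if $n = 0$ then part (i) gives $rp - pr = 0$ outright, whereas if $n \geq 1$ then part (ii) yields $\deg(rp - pr) \leq n - 1 < n$. In the second case $rp - pr$ is an element of $I$ of degree strictly below the minimal degree $n$, so minimality forces it to vanish.

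Either way $rp - pr = 0$, and since $r \in Z(R)$ was arbitrary, $p$ commutes with every element of $Z(R)$, i.e.\ $p \in Z(R)'$. As $p \neq 0$, this shows $I \cap Z(R)' \neq \{0\}$, as desired. The argument requires no hypotheses on $R$ beyond those already in force, and works regardless of whether $\delta$ vanishes.

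I do not expect a serious obstacle here: the whole proof hinges on selecting the minimal-degree element and recognizing that its commutator with a central element drops in degree, which is exactly the content of Lemma~\ref{commutantLemma}(ii). The one point I would be careful about is to commute $p$ only with elements of $Z(R)$, and not with arbitrary elements of $R$, since the clean degree drop in the commutator relies on the central hypothesis of that lemma.
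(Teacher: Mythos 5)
Your proof is correct and follows essentially the same route as the paper's own argument: pick a non-zero element of $I$ of minimal degree, commute it with an arbitrary $r \in Z(R)$, and use Lemma~\ref{commutantLemma}(ii) together with minimality to force the commutator to vanish, so that the chosen element lies in $Z(R)'$. You are in fact slightly more explicit than the paper at two points --- spelling out that the $n=0$ case follows from Lemma~\ref{commutantLemma}(i) (the paper just notes $R \subseteq Z(R)'$) and emphasizing that the degree-drop argument requires $r$ to be central --- but the underlying idea is identical.
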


\begin{proof}
Let $I$ be an arbitrary non-zero ideal of $\DiffPol$.
Take $a \in I \setminus \{0\}$ such that $n:=\deg(a)$ is minimal. If $n=0$ then we are done. 
Otherwise, if $a$ is of degree $n>0$ it follows from Lemma \ref{commutantLemma}(ii) that 
 $\deg(ra-ar) < \deg(a)$. Since $ra-ar \in I$ we conclude by the minimality of $\deg(a)$ that $ra-ar=0$.   
Hence $I \cap Z(R)' \neq \{0\}$.
\end{proof}

\begin{corollary}\label{MaxCommIdealIntersection}
If $R$ is a maximal commutative subring of $\DiffPol$,
then $I \cap R \neq \{0\}$ holds for any non-zero ideal $I$ of $\DiffPol$.
\end{corollary}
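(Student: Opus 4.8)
The plan is to derive Corollary~\ref{MaxCommIdealIntersection} directly from Proposition~\ref{IdealIntersection} by observing that the hypothesis of maximal commutativity collapses the centralizer $Z(R)'$ onto $R$ itself. The key conceptual point is that the conclusion $I \cap R \neq \{0\}$ is formally weaker than $I \cap Z(R)' \neq \{0\}$ precisely when $R = Z(R)'$, so the whole content of the corollary is the translation of the maximal-commutativity assumption into this equality.

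First I would unpack the definition of maximal commutativity as stated in Section~\ref{sec:defnotOreExt}: a commutative subring $T$ of $S$ is maximal commutative when its centralizer in $S$ coincides with $T$. Applying this with $S = \DiffPol$ and $T = R$ (noting that $R$ is commutative, since its centralizer $Z(R)'$ equals $R$ forces $R \subseteq Z(R)$, i.e. $R$ is commutative), the assumption gives that the centralizer of $R$ in $\DiffPol$ is exactly $R$. Here I would be slightly careful: Proposition~\ref{IdealIntersection} produces an element of the centralizer of $Z(R)$, not of $R$. But when $R$ is commutative we have $Z(R) = R$, so $Z(R)' $ is literally the centralizer of $R$ in $\DiffPol$, and maximal commutativity of $R$ says this centralizer is $R$. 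Hence $Z(R)' = R$ under the hypothesis.

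With that identification in hand, the proof is a one-line application: let $I$ be a non-zero ideal of $\DiffPol$. By Proposition~\ref{IdealIntersection} we have $I \cap Z(R)' \neq \{0\}$. Since $R$ is maximal commutative and commutative, $Z(R)' = R$, so $I \cap R = I \cap Z(R)' \neq \{0\}$, as desired.

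The only subtlety worth flagging—and what I expect to be the single point a careful reader would check—is the step $Z(R) = R$, which is where the word ``commutative'' (implicit in calling $R$ maximal commutative) does its work. If $R$ were merely assumed to be a subring whose centralizer is itself, one would need to first argue $R \subseteq Z(R)$; but the standing convention that a maximal commutative subring is in particular commutative makes $Z(R) = R$ immediate, and there is no further calculation to perform. Thus the corollary is genuinely a corollary, with the real analytic content residing entirely in Proposition~\ref{IdealIntersection}.
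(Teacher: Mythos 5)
Your proof is correct and is exactly the intended derivation: the paper states this as an immediate corollary of Proposition~\ref{IdealIntersection}, and your identification $Z(R)=R$ (from commutativity of $R$) followed by $Z(R)'=R$ (from maximal commutativity) is precisely the implicit argument. Your flagging of the step $Z(R)=R$ is a fair point of care, but there is no divergence from the paper's route.
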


We have seen that if $\DiffPol$ is a simple ring, then its center is a field and $R$ is $\delta$-simple. These necessary conditions are well-known, see e.g. \cite{Goodearl2004}. 
We will now show that they are also sufficient and begin with the following lemma.

\begin{lemma}\label{monic}
 Let $S=\DiffPol$ be a differential polynomial ring where $R$ is $\delta$-simple. For every element $b \in S \setminus \{0 \}$ we can find an element $b' \in S$ such that: 
\begin{enumerate}[{\rm (i)}]
 \item $ b' \in SbS$;
 \item $\deg(b') = \deg(b)$; 
 \item $b'$ has $1$ as its highest degree coefficient.
\end{enumerate}
\end{lemma}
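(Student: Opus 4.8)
The plan is to take an arbitrary nonzero $b = \sum_{i=0}^n b_i x^i \in S$ with leading coefficient $b_n \neq 0$, and to produce from it, using only the two-sided ideal $SbS$ it generates together with the $\delta$-simplicity of $R$, a new element of the same degree whose leading coefficient is $1$. The degree will be preserved throughout because every operation I perform on $b$ will either multiply on the left/right by elements of $R$ (which cannot raise the degree, since $\sigma = \identity_R$ so leading coefficients multiply as $b_n \mapsto r b_n$ or $b_n \mapsto b_n r$) or add elements of strictly lower degree. The crux is that I need to manufacture, inside $R$, the identity $1$ as an $R$-combination involving $b_n$ and its images under $\delta$, and the tool for that is exactly the $\delta$-simplicity hypothesis.

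First I would isolate the set of all possible leading coefficients that can be obtained from elements of $SbS$ of degree exactly $n$. Concretely, consider
\begin{displaymath}
  J := \{\, c \in R : c \text{ is the degree-}n\text{ coefficient of some } p \in SbS \text{ with } \deg p \le n \,\} \cup \{0\}.
\end{displaymath}
The first key step is to check that $J$ is a $\delta$-invariant ideal of $R$. That $J$ is an additive subgroup and is closed under left and right multiplication by $R$ follows from the fact that $SbS$ is a two-sided ideal and that multiplying a degree-$\le n$ element by $r \in R$ on either side keeps it of degree $\le n$ and sends the top coefficient to $rc$ or $cr$; so $J$ is an ideal of $R$. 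For $\delta$-invariance, the essential observation is Lemma~\ref{commutantLemma}(iii): if $q = \sum_i q_i x^i$ then $xq - qx = \sum_i \delta(q_i) x^i$, so commuting a representative $p$ with $x$ stays inside the ideal $SbS$, keeps the degree $\le n$, and sends the leading coefficient $c$ to $\delta(c)$. Hence $\delta(J) \subseteq J$.

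Next, since $b_n \in J$ and $b_n \neq 0$, the ideal $J$ is nonzero, so $\delta$-simplicity of $R$ forces $J = R$. In particular $1 \in J$, which by the definition of $J$ means precisely that there is an element $b' \in SbS$ of degree $\le n$ whose degree-$n$ coefficient is $1$; since that coefficient is nonzero, $\deg(b') = n = \deg(b)$, and $b'$ is monic in the required sense. This gives (i), (ii), (iii) simultaneously.

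The main obstacle I anticipate is verifying carefully that $J$ really is closed under $\delta$ and that the representatives can always be taken of degree $\le n$ rather than larger. The delicate point is that a general element of $SbS$ is a finite sum $\sum_k s_k b t_k$ with $s_k, t_k \in S$ of arbitrary degree, so its degree can far exceed $n$; I must argue that the degree-$n$ component of such a sum is still realized by combinations that keep me inside the ideal when I apply $\delta$ via the commutator with $x$. The clean way around this is to note that $J$ as defined collects top coefficients of the degree-$\le n$ part, and that both the $R$-multiplication and the $\mathrm{ad}_x$ operation $p \mapsto xp - px$ map the filtered piece $SbS \cap \{\deg \le n\}$ into itself while acting on leading coefficients by $c \mapsto rc,\ cr,\ \delta(c)$ respectively; establishing that this filtered piece is stable under $\mathrm{ad}_x$ is where I would spend the most care, and Lemma~\ref{commutantLemma} is exactly what makes it work.
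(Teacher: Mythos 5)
Your proof is correct and takes essentially the same route as the paper: your $J$ is precisely the paper's $H_n(SbS)$ (the degree-$n$ coefficients of elements of $SbS$ of degree at most $n$), and you verify the same three facts --- closure under left and right multiplication by $R$ (using $\sigma=\identity_R$ so degrees do not rise) and $\delta$-invariance via the commutator $p\mapsto xp-px$ as in Lemma~\ref{commutantLemma}(iii) --- before applying $\delta$-simplicity to $b_n\in J\setminus\{0\}$ to get $1\in J$. The ``delicate point'' you worry about at the end is already dissolved by your own definition, which only ever quantifies over representatives of degree at most $n$, exactly as the paper does.
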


\begin{proof}
Let $J$ be an arbitrary ideal of $\DiffPol$ and $n$ an arbitrary non-negative integer. 
  Define the following set
\begin{displaymath}
 H_n(J) = \{ a \in R \ | \  \exists \, c_0,c_1,\cdots c_{n-1} \in R : \ ax^n + \sum_{i=0}^{n-1} c_i x^i \in J \},
\end{displaymath}
consisting of the $n$:th degree coefficients of all elements in $J$ of degree \emph{at most} $n$. 

Clearly, $H_n(J)$ is an additive subgroup of $R$. Take any $r \in R$. If $ax^n +\sum_{i=0}^{n-1} c_i x^i$ belongs to $J$, then so does $ra x^n +\sum_{i=0}^{n-1} r c_i x^{i}$. Thus, $H_n(J)$ is a left ideal of $R$. Furthermore, if 
$c = ax^n +\sum_{i=0}^{n-1} c_i x^i$ is an element of $J$ then so is $cr$, and it is not difficult to see that $cr$ has degree at most $n$ and that its $n$:th degree coefficient is $ar$. Thus, $H_n(J)$ is also a right ideal of $R$
and hence an ideal.

We claim that $H_n(J)$ is a $\delta$-invariant ideal. Indeed, take any $a \in H_n(J)$ and a corresponding element $ax^n + \sum_{i=0}^{n-1} c_i x^i \in J$. Then we get
\begin{align*}
  x(ax^n + \sum_{i=0}^{n-1} c_i x^i) -(ax^n + \sum_{i=0}^{n-1} c_i x^i)x 
   = \delta(a) x^n + \sum_{i=0}^{n-1} \delta(c_i) x^i  \in J.
\end{align*}
This implies that $\delta(a) \in H_n(J)$ and that $H_n(J) $ is $\delta$-invariant.

Now, take any $b\in S \setminus \{0\}$ and put $n=\deg(b)$.
Let $b_n$ denote the $n$:th degree coefficient of $b$.
Put $J=SbS$ and note that $b_n \in H_n(SbS)$.
Since $R$ is $\delta$-simple and $H_n(SbS)$ is non-zero we conclude that $H_n(SbS)=R$. Thus, $1 \in H_n(SbS)$ and the proof is finished.  
\end{proof}

We now show that the assumption that $R$ is $\delta$-simple allows us to reach a stronger conclusion than in Proposition \ref{IdealIntersection}.

\begin{prop}\label{CenterIdealIntersection}
Let $S=\DiffPol$ be a differential polynomial ring where $R$ is $\delta$-simple. Then $I \cap Z(S) \neq \{ 0\}$ holds for every non-zero ideal $I$ of $S$. 
\end{prop}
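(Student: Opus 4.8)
The plan is to locate a single nonzero central element inside $I$ by taking an element of minimal degree and showing that minimality forces all the relevant commutators to vanish. Let $n$ be the least degree occurring among the nonzero elements of $I$ (this is well defined since $I \neq \{0\}$). I would first pick any $a \in I$ with $\deg(a)=n$ and feed it into Lemma~\ref{monic}; the hypothesis that $R$ is $\delta$-simple is exactly what that lemma consumes, and it returns an element $b \in SaS \subseteq I$ with $\deg(b)=n$ and highest-degree coefficient $1$. Thus I may assume $b = x^n + b_{n-1}x^{n-1} + \cdots + b_0 \in I$ is \emph{monic} and of minimal degree. The goal is then to prove $b \in Z(S)$, which finishes the proof since $b \neq 0$.

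Because $S$ is generated as a ring by $R$ together with $x$, it suffices to check that $b$ commutes with $x$ and with every $r \in R$. In each case the commutator again lies in $I$, so if I can show it has degree strictly less than $n$, minimality of $n$ forces it to be zero. For $x$ this is immediate from Lemma~\ref{commutantLemma}(iii): $xb-bx = \sum_{i=0}^n \delta(b_i)x^i$, and the top coefficient is $\delta(b_n)=\delta(1)=0$, so $\deg(xb-bx)\le n-1$ and hence $xb-bx=0$.

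The commutator with a general $r \in R$ is the step I expect to be the crux, and it is where monicity of $b$ is essential. Expanding $rb = \sum_i (rb_i)x^i$ and, via Lemma~\ref{CommutationRuleDiffRing}, $br = \sum_i b_i x^i r$, one checks that the degree-$n$ coefficient of $rb$ is $rb_n = r$ while that of $br$ is $b_n r = r$; these agree \emph{only because} $b_n=1$, so they cancel and $\deg(rb-br)\le n-1$. Here lies the main obstacle: since $R$ need not be commutative, for a non-monic leading coefficient $b_n$ the terms $rb_n$ and $b_n r$ would generally differ, and the argument would collapse — this is precisely why $\delta$-simplicity, through the normalization provided by Lemma~\ref{monic}, is needed, and why one can strengthen the mere conclusion $I \cap Z(R)' \neq \{0\}$ of Proposition~\ref{IdealIntersection} all the way to genuine centrality. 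With $\deg(rb-br)<n$ and $rb-br \in I$, minimality gives $rb-br=0$ for every $r \in R$. Combined with $xb-bx=0$, this shows $b$ centralizes a generating set of $S$, so $b \in I \cap Z(S)$ and $I \cap Z(S) \neq \{0\}$.
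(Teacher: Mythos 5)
Your proof is correct and follows essentially the same route as the paper's: pick a nonzero element of $I$ of minimal degree, normalize it to be monic via Lemma~\ref{monic} (which is where $\delta$-simplicity enters), and use minimality of the degree to force the commutators $xb-bx$ and $rb-br$ to vanish, whence $b\in Z(S)$ since $R$ and $x$ generate $S$. Your explicit check that monicity is what makes $\deg(rb-br)<n$ over a possibly non-commutative $R$ is exactly the point behind the paper's brief remark ``Since $b_n=1$ we have that $\deg(rb-br)<\deg(b)$''.
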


\begin{proof}
Let $I$ be any non-zero ideal of $S$ and choose a non-zero element $b \in I$ of minimal degree $n$. By Lemma \ref{monic} we may assume that its highest degree coefficient is $1$. Let us write $b = x^n +\sum_{i=0}^{n-1} c_i x^i$.
Since $b_n=1$ we have that $\deg(rb-br) < \deg(b)$ for all $r$ in $R$.  Since $rb-br \in I$ it follows from the minimality of $\deg(b)$ that $rb-br=0$, i.e.  $b$ commutes with each element in $R$.

Similarly note that $xb-bx \in I$. By Lemma \ref{commutantLemma}(iii) and the fact that $\delta(1)=0$
we get $\deg(xb-bx) < \deg(b)$ which implies that $xb-bx=0$. Since $R$ and $x$ generate $S$, $b$ must lie in the center of $S$. 
\end{proof}

We now obtain the promised characterization of when $\DiffPol$ is simple. In \cite{Hauger} Hauger obtains a similar result for a class of rings that are similar to, but distinct from, the ones studied in the present article. 
Hauger's method of proof is also different from ours.  

\begin{theorem}\label{MainResult2}
 Let $R$ be a ring and $\delta$ a derivation of $R$. The differential polynomial ring $\DiffPol$ is simple if and only if $R$ is $\delta$-simple and $Z(\DiffPol)$ is a field.
\end{theorem}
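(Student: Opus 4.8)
The plan is to prove the two implications separately; in both directions the genuine work has already been carried out in the preceding results, so the argument here is mainly a matter of assembling them correctly.

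For the forward implication I would assume that $\DiffPol$ is simple. The first observation is that, since $\sigma = \identity_R$, the condition $\sigma(J) \subseteq J$ is automatic for any ideal $J$, so a $\sigma$-$\delta$-invariant ideal is simply a $\delta$-invariant ideal and $\sigma$-$\delta$-simplicity coincides with $\delta$-simplicity. Proposition~\ref{SimpleInvariant} then applies (with $\sigma = \identity_R$) and immediately yields that $R$ is $\delta$-simple. The second ingredient is Remark~\ref{CenterField}: the center of any simple ring is a field, so $Z(\DiffPol)$ is a field. This completes one direction with no further calculation.

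For the reverse implication I would assume that $R$ is $\delta$-simple and that $Z(\DiffPol)$ is a field, and show that $S := \DiffPol$ has no proper non-zero two-sided ideal. Let $I$ be a non-zero ideal of $S$. Since $R$ is $\delta$-simple, Proposition~\ref{CenterIdealIntersection} applies and produces a non-zero element $z \in I \cap Z(S)$. Because $Z(S)$ is a field, $z$ is invertible in $Z(S)$, with inverse $z^{-1} \in Z(S) \subseteq S$; as $I$ is an ideal and $z \in I$, it follows that $1 = z^{-1} z \in I$, whence $I = S$. Therefore $S$ is simple.

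The main obstacle is not located in this theorem but upstream: essentially all of the content resides in Proposition~\ref{CenterIdealIntersection}, which itself rests on Lemma~\ref{monic} (the normalization of a minimal-degree element to a monic one, using $\delta$-simplicity through the $\delta$-invariant ideals $H_n(J)$). Granting those results, the present statement is a short synthesis. The only point requiring care is the reduction of $\sigma$-$\delta$-simplicity to $\delta$-simplicity in the case $\sigma = \identity_R$ when invoking Proposition~\ref{SimpleInvariant}, and the observation that a non-zero central element lying in an ideal of a ring with field center forces the ideal to be everything.
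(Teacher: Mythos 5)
Your proposal is correct and follows essentially the same route as the paper: Proposition~\ref{SimpleInvariant} together with Remark~\ref{CenterField} for the forward direction, and Proposition~\ref{CenterIdealIntersection} for the converse. The only difference is that you spell out the final step the paper leaves implicit --- that a non-zero element of $I \cap Z(S)$ is invertible because $Z(S)$ is a field, forcing $1 \in I$ --- which is a worthwhile clarification but not a different argument.
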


\begin{proof}
If $R$ is $\delta$-simple and $Z(\DiffPol)$ is a field then, by Proposition \ref{CenterIdealIntersection}, $\DiffPol$ is simple. The converse follows 
from Proposition \ref{SimpleInvariant} and Remark \ref{CenterField}. 
\end{proof}

A different sufficient condition for $\DiffPol$ to be simple is given by the following. 

\begin{prop}\label{DeltaMaxCommSimple}
If $R$ is $\delta$-simple and a maximal commutative subring of $\DiffPol$,
then $\DiffPol$ is a simple ring.
\end{prop}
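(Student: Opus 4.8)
The plan is to show that every non-zero ideal $I$ of $\DiffPol$ must contain $1$ and hence coincide with the whole ring. The crucial leverage comes from the ideal intersection property that maximal commutativity of $R$ provides, so the proof should be quite short once that input is in place.

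First I would invoke Corollary \ref{MaxCommIdealIntersection}: since $R$ is assumed to be maximal commutative in $\DiffPol$, any non-zero ideal $I$ of $\DiffPol$ satisfies $I \cap R \neq \{0\}$. It therefore suffices to analyse the subset $J := I \cap R$ of $R$ and show it is all of $R$. I would next verify that $J$ is a $\delta$-invariant ideal of $R$. That $J$ is a two-sided ideal of $R$ is immediate: if $a \in J$ and $r \in R$, then $ra$ and $ar$ lie in $I$ (as $I$ is an ideal of $\DiffPol$) and in $R$ (as $R$ is a subring), hence in $J$. For $\delta$-invariance, note that when $\sigma = \identity_R$ the multiplication rule \eqref{MultiplicationRule} gives $xa - ax = \delta(a)$ for every $a \in R$; this is the degree-zero instance of Lemma \ref{commutantLemma}(iii). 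Thus for $a \in J$ we have $\delta(a) = xa - ax \in I$, because $I$ is closed under left and right multiplication by $x$, and of course $\delta(a) \in R$, so $\delta(a) \in J$.

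Finally, since $R$ is $\delta$-simple and $J = I \cap R$ is a non-zero $\delta$-invariant ideal of $R$, we must have $J = R$. In particular $1 \in J \subseteq I$, whence $I = \DiffPol$. As $I$ was an arbitrary non-zero ideal, $\DiffPol$ is simple.

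I do not expect a genuine obstacle here, as all the substantive work (Proposition \ref{IdealIntersection} and its corollary) has been done upstream. The one point that warrants care is the claim that $I \cap R$ is $\delta$-invariant: this hinges on the identity $\delta(a) = xa - ax$ valid in the differential polynomial ring, and it is precisely here that the hypothesis $\sigma = \identity_R$ enters, ensuring that plain $\delta$-invariance (rather than full $\sigma$-$\delta$-invariance) is the relevant notion matching $\delta$-simplicity of $R$.
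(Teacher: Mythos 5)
Your proof is correct and takes essentially the same route as the paper: the paper's proof also identifies $I\cap R$ (written there as $H_0(I)$), invokes Corollary \ref{MaxCommIdealIntersection} for its non-triviality, and appeals to the proof of Lemma \ref{monic} for $\delta$-invariance, where your direct verification via $xa-ax=\delta(a)$ is precisely the $n=0$ instance of that lemma's computation. There is no gap; you have merely inlined the $\delta$-invariance argument rather than citing it.
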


\begin{proof}
Let $J$ be an arbitrary non-zero ideal of $\DiffPol$. Using the notation of the proof of Lemma \ref{monic} we see that $H_0(J)=J \cap R$. 
By Corollary \ref{MaxCommIdealIntersection} and the proof of Lemma \ref{monic}, it follows that $H_0(J)$ is a non-zero $\delta$-invariant ideal of $R$.
By the assumptions we get $H_0(J)=R$, which shows that $1_{\DiffPol} \in J$.
Thus, $J=\DiffPol$.
\end{proof}

\begin{remark}
 Under the assumptions of Proposition \ref{DeltaMaxCommSimple}, the center of $\DiffPol$ consists of the constants in $R$. 
\end{remark}

In the following example we verify the well-known fact that the Weyl algebra is simple as an application of Proposition \ref{DeltaMaxCommSimple}. 

\begin{exmp}[The Weyl algebra]\label{WeylAlgebra}
 Take $R=k[y]$ for some field $k$ of characteristic zero. Let $\sigma=\identity_R$ and define $\delta$ to be the usual formal derivative of polynomials. Then $R[x;\sigma,\delta]$ is the Weyl algebra.
It is easy to see that $k[y]$ is $\delta$-simple and a maximal commutative subring of the Weyl algebra and thus $\OreGen$ is simple by Proposition~\ref{DeltaMaxCommSimple}. 
\end{exmp}

Maximal commutativity of $R$ in $\DiffPol$ does not imply $\delta$-simplicity of $R$, as demonstrated by the following example.  

\begin{exmp}
Let $k$ be a field of characteristic zero and take $R=k[y]$. Define $\delta$ to be the unique derivation on $R$ satisfying $\delta(y)=y$ and $\delta(c)=0$ for $c\in k$. No element outside of $R$ commutes with $y$ and thus
 $R$ is a maximal commutative subring of $\DiffPol$.   
However, $R$ is not $\delta$-simple since $Ry$ is a proper $\delta$-invariant ideal of $R$. 
\end{exmp}

The following lemma appears as \cite[Lemma 4.1.3]{Jordan} and also follows from Proposition \ref{SimpleInvariant} and Remark \ref{innerderivationnonsimple}.
\begin{lemma}\label{SimpleDiffRing}
If $\DiffPol$ is simple, then $R$ is $\delta$-simple and $\delta$ is outer.
\end{lemma}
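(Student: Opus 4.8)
The plan is to prove Lemma \ref{SimpleDiffRing}, which states that if $\DiffPol$ is simple, then $R$ is $\delta$-simple and $\delta$ is outer. I would establish the two conclusions separately, drawing directly on results already proved in the excerpt.

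For the first conclusion, that $R$ is $\delta$-simple: since $\sigma = \identity_R$, being $\delta$-simple is exactly the same as being $\sigma$-$\delta$-simple (the condition $\sigma(J) \subseteq J$ is automatic for the identity map). Thus this follows immediately from Proposition \ref{SimpleInvariant}, which asserts that simplicity of $\OreGen$ forces $R$ to be $\sigma$-$\delta$-simple, applied in the special case $\sigma = \identity_R$.

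For the second conclusion, that $\delta$ is outer: I would argue by contraposition. Suppose $\delta$ is inner. Then by Remark \ref{innerderivationnonsimple}, $\OreGen$ is isomorphic to a skew polynomial ring, and skew polynomial rings are never simple (as noted at the start of Section \ref{sec:simplicity}, the ideal generated by $x$ is proper). Hence $\DiffPol$ would not be simple, contradicting our hypothesis. Therefore $\delta$ must be outer.

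Since both halves reduce directly to previously established results, there is essentially no obstacle here; the lemma is a packaging of Proposition \ref{SimpleInvariant} and Remark \ref{innerderivationnonsimple}. The only point requiring a moment's care is observing that for $\sigma = \identity_R$ the notion of $\sigma$-$\delta$-invariance collapses to mere $\delta$-invariance, so that $\sigma$-$\delta$-simplicity and $\delta$-simplicity coincide; this is immediate from the definitions. I would also remark, as the lemma statement does, that this recovers \cite[Lemma 4.1.3]{Jordan}.
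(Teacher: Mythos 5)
Your proposal is correct and follows exactly the route the paper itself indicates: the text preceding Lemma~\ref{SimpleDiffRing} states that the lemma ``follows from Proposition~\ref{SimpleInvariant} and Remark~\ref{innerderivationnonsimple},'' which is precisely your decomposition, including the observation that $\sigma$-$\delta$-simplicity reduces to $\delta$-simplicity when $\sigma=\identity_R$. There is nothing to add or correct.
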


\begin{corollary}\label{cor_deltasimple2}
Let $R$ be a commutative domain of characteristic zero.
If $\DiffPol$ is simple, then $R$ is a maximal commutative subring of $\DiffPol$.
\end{corollary}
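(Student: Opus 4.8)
The plan is to reduce the statement to Proposition~\ref{MaxCommutativityDiffRing}, which already asserts exactly the desired conclusion for a commutative domain of characteristic zero under the single additional hypothesis that the derivation $\delta$ is non-zero. Thus the entire task is to verify that simplicity of $\DiffPol$ forces $\delta \neq 0$; once that is in hand, the corollary follows immediately by invoking the proposition verbatim, since its hypotheses are precisely that $R$ is a commutative domain of characteristic zero and that $\delta$ is non-zero.

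To see that $\delta$ cannot vanish I would argue by contradiction. If $\delta = 0$, then $\DiffPol = R[x;\identity_R,0]$ is a skew polynomial ring (in fact the ordinary polynomial ring $R[x]$). As noted at the beginning of Section~\ref{sec:simplicity}, in any skew polynomial ring the ideal generated by $x$ is proper, so such a ring is never simple; this contradicts the hypothesis. Alternatively, one may appeal directly to Lemma~\ref{SimpleDiffRing}, which says that simplicity of $\DiffPol$ implies $\delta$ is outer. Since $R$ is commutative and $\sigma=\identity_R$, an inner derivation has the form $r \mapsto ar - ra$, which vanishes identically; hence the only inner derivation is the zero map, and ``outer'' is synonymous with ``non-zero''. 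Either route shows $\delta \neq 0$.

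With $\delta \neq 0$ established, Proposition~\ref{MaxCommutativityDiffRing} applies and yields that $R$ is a maximal commutative subring of $\DiffPol$, completing the argument. There is no genuine obstacle here: the substantive content has already been supplied by Proposition~\ref{MaxCommutativityDiffRing}, whose proof exploits the characteristic-zero and domain hypotheses to annihilate the leading coefficient of any putative commuting element of positive degree. The only point requiring attention is the elementary observation that, for commutative $R$, the notions of outer derivation and non-zero derivation coincide, so that the conclusion of Lemma~\ref{SimpleDiffRing} (equivalently, the non-simplicity of $R[x;\identity_R,0]$) delivers exactly the hypothesis needed to trigger Proposition~\ref{MaxCommutativityDiffRing}.
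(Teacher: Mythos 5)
Your proof is correct and follows the paper's own route: the paper derives the corollary by combining Lemma~\ref{SimpleDiffRing} (simplicity forces $\delta$ outer, which for commutative $R$ with $\sigma=\identity_R$ means non-zero) with Proposition~\ref{MaxCommutativityDiffRing}, exactly as you do. Your alternative direct check that $R[x;\identity_R,0]$ is non-simple is a harmless variant of the same reduction.
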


\begin{proof}
This follows from Lemma \ref{SimpleDiffRing} and Proposition \ref{MaxCommutativityDiffRing}. 
\end{proof}

The following proposition follows from \cite[Theorem 4.1.4]{Jordan}.   

\begin{prop}\label{EquivOreGen}
Let $R$ be a commutative ring that is torsion-free as a module over $\Z$.
The following assertions are equivalent:
\begin{enumerate}[{\rm (i)}]
	\item $R[x;\identity_R, \delta]$ is a simple ring;
	\item $R$ is $\delta$-simple and $\delta$ is non-zero.
	\end{enumerate}
\end{prop}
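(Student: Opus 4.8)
The plan is to prove the two implications separately, leaning on the results already established for differential polynomial rings.

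For the implication (i) $\Rightarrow$ (ii) I would simply unpack the known necessary conditions. If $\DiffPol$ is simple, then Lemma~\ref{SimpleDiffRing} gives at once that $R$ is $\delta$-simple and that $\delta$ is outer. Since $R$ is commutative and $\sigma=\identity_R$, the only inner derivation is the zero derivation (for $ar-ra=0$ identically in a commutative ring), so ``outer'' is synonymous with ``non-zero''. Hence $\delta\neq 0$, which is exactly (ii). This direction requires no use of the $\Z$-torsion-freeness hypothesis.

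For the converse (ii) $\Rightarrow$ (i) the natural strategy is to show that $R$ is a maximal commutative subring of $\DiffPol$ and then invoke Proposition~\ref{DeltaMaxCommSimple}. The hard part is establishing maximal commutativity: the tool we have for that, Proposition~\ref{MaxCommutativityDiffRing}, assumes $R$ is a commutative \emph{domain}, whereas here $R$ is only a commutative $\Z$-torsion-free ring, so I must supply a new argument that trades the domain hypothesis for $\delta$-simplicity. I would argue by contradiction: suppose $q=\sum_{i=0}^{n}a_i x^i$ with $n\geq 1$ and $a_n\neq 0$ commutes with all of $R$. By Lemma~\ref{commutantLemma}(ii) the coefficient of $x^{n-1}$ in $rq-qr$ is $-n a_n\delta(r)$, and since $rq-qr=0$ this forces $n a_n\delta(r)=0$ for every $r\in R$. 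Because $R$ is $\Z$-torsion-free and $n\neq 0$, I may cancel $n$ to get $a_n\delta(r)=0$ for all $r$, i.e.\ $\delta(R)\subseteq J$, where $J:=\{s\in R : a_n s = 0\}$ is the annihilator ideal of $a_n$. Then $\delta(J)\subseteq\delta(R)\subseteq J$, so $J$ is a $\delta$-invariant ideal, and $\delta$-simplicity forces $J=\{0\}$ or $J=R$. If $J=R$ then $a_n=a_n\cdot 1=0$, contradicting $a_n\neq 0$; if $J=\{0\}$ then $a_n$ is regular and $a_n\delta(r)=0$ yields $\delta=0$, contradicting $\delta\neq 0$. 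Either way we reach a contradiction, so no such $q$ exists and $R$ is maximal commutative.

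With maximal commutativity in hand, Proposition~\ref{DeltaMaxCommSimple} immediately gives that $\DiffPol$ is simple, completing (ii) $\Rightarrow$ (i). The decisive new device is the passage from the single relation $a_n\delta(r)=0$ to the $\delta$-invariant annihilator ideal of the leading coefficient, which lets $\delta$-simplicity do the work that the domain hypothesis did in Proposition~\ref{MaxCommutativityDiffRing}; everything else is routine bookkeeping with the commutation formula of Lemma~\ref{commutantLemma} together with the $\Z$-torsion-freeness used to clear the integer coefficient.
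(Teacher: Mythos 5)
Your proposal is correct, and for the substantive direction (ii)$\Rightarrow$(i) it takes a genuinely different route from the paper. The paper argues directly inside an arbitrary non-zero ideal $J$ of $\DiffPol$: it picks $q\in J\setminus\{0\}$ of minimal degree $n$, invokes Lemma~\ref{monic} (whose $H_n(J)$ construction is where $\delta$-simplicity enters) to normalize $q$ to have leading coefficient $1$, and then Lemma~\ref{commutantLemma}(ii) together with minimality forces $rq-qr=0$, hence $-n\delta(r)=0$; torsion-freeness kills $n$, giving $\delta(r)=0$ for all $r$, which contradicts $\delta\neq 0$ unless $n=0$, whence $q=1$ and $J=\DiffPol$. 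You instead first prove that $R$ is a maximal commutative subring and then quote Proposition~\ref{DeltaMaxCommSimple}. Your decisive new step checks out: from $na_n\delta(r)=0$, torsion-freeness (applied to the element $a_n\delta(r)$) yields $a_n\delta(r)=0$, so $\delta(R)$ lies in the annihilator ideal $J=\{s\in R: a_ns=0\}$, which is then trivially $\delta$-invariant; $\delta$-simplicity collapses $J$ to $\{0\}$ or $R$, and each case is absurd ($a_n$ regular forces $\delta=0$; $J=R$ forces $a_n=0$). This is precisely the device that lets $\delta$-simplicity substitute for the domain hypothesis of Proposition~\ref{MaxCommutativityDiffRing}. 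Note that both routes ultimately channel $\delta$-simplicity through the ideals $H_n(J)$ of Lemma~\ref{monic} --- the paper at the leading degree $n$, you only at degree $0$ (inside the proof of Proposition~\ref{DeltaMaxCommSimple}) --- and both use Lemma~\ref{commutantLemma}(ii) plus $\Z$-torsion-freeness in the same way. What your version buys is a strictly stronger intermediate conclusion: under hypothesis (ii) and torsion-freeness, $R$ is maximal commutative in $\DiffPol$, which extends Proposition~\ref{MaxCommutativityDiffRing} (and hence Corollary~\ref{cor_deltasimple2}) beyond commutative domains of characteristic zero; the paper's direct minimal-degree argument is somewhat more economical, as it bypasses maximal commutativity altogether. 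Your treatment of (i)$\Rightarrow$(ii) coincides with the paper's, via Lemma~\ref{SimpleDiffRing} and the observation (made explicitly in the paper before Proposition~\ref{MaxCommutativityDiffRing}) that for commutative $R$ with $\sigma=\identity_R$ an outer derivation is the same as a non-zero one.
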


\begin{proof}
(i)$\Rightarrow$(ii): This follows from Lemma \ref{SimpleDiffRing}.\\
(ii)$\Rightarrow$(i): Suppose that $R$ is $\delta$-simple and $\delta$ is non-zero.
Let $J$ be an arbitrary non-zero ideal of $R[x;\identity_R, \delta]$.
Choose some $q\in J\setminus \{0\}$ of lowest possible degree, which we denote by $n$.
Seeking a contradiction, suppose that $n>0$.
By Lemma \ref{monic} we may assume that $q$ has $1$ as its highest degree coefficient.


Let $r \in R$ be arbitrary.
Lemma \ref{commutantLemma}(ii) yields $rq -qr = -n\delta(r)x^{n-1} + [\text{lower terms}]$.
By minimality of $n$ and the fact that $rq-qr \in I$, we get $rq-qr=0$.
Since $R$ is torsion-free, we conclude that $\delta(r)=0$. This is a contradiction
and hence $n=0$.
Thus, $q=1$ and hence $J=R[x;\identity_R,\delta]$.
\end{proof}

Example~\ref{ex_EquivOreGen} demonstrates that assertion (ii) in 
Proposition~\ref{EquivOreGen} does not imply assertion (i) for a general commutative ring $R$.

\begin{exmp}\label{ex_EquivOreGen}
 Let $\mathbb{F}_2$ be the field with two elements and put $R= \mathbb{F}_2 [y]/\langle  y^2\rangle $. The ideal of $\mathbb{F}_2 [y]$  generated by $y^2$  is invariant under $\frac{d}{dy}$.  Thus, $\frac{d}{dy}$
 induces a derivation $\delta$ on $R$ such that
$\delta(y)=1$. 

$R$ is clearly $\delta$-simple but $\DiffPol$ is not simple. To see this note that $x^2$ is a central element. From that it is easy to see that the ideal generated by $x^2$ is proper.
\end{exmp}

We are now ready to state and prove one of the main results of this article. Note that by Theorem \ref{SimpleOre} all simple Ore extensions over commutative domains of characteristic zero are differential polynomial rings. 

\begin{theorem}\label{MainResult}
Let $R$ be a commutative domain of characteristic zero. 
The following assertions are equivalent:
\begin{enumerate}[{\rm (i)}]
	\item $R[x;\identity_R, \delta]$ is a simple ring;
	\item $R$ is $\delta$-simple and a maximal commutative subring of $R[x;\identity_R, \delta]$.
\end{enumerate}
\end{theorem}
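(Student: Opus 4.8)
The plan is to prove the two implications separately and assemble results already established in the preceding subsections; pleasantly, almost all the substantive work has been done, so this theorem is really a synthesis.

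For the direction (ii) $\Rightarrow$ (i), I would simply invoke Proposition \ref{DeltaMaxCommSimple}, which states that whenever $R$ is $\delta$-simple and a maximal commutative subring of $\DiffPol$, the ring $\DiffPol$ is simple. Nothing beyond (ii) is needed here; in fact the standing hypotheses that $R$ be a commutative domain of characteristic zero are not even used for this direction.

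For the converse (i) $\Rightarrow$ (ii), I would establish the two conjuncts of (ii) in turn. First, the $\delta$-simplicity of $R$: specializing Proposition \ref{SimpleInvariant} to $\sigma = \identity_R$, a $\sigma$-$\delta$-invariant ideal is precisely a $\delta$-invariant ideal, since $\identity_R(J) = J \subseteq J$ holds automatically; hence $\sigma$-$\delta$-simplicity coincides with $\delta$-simplicity, and simplicity of $\DiffPol$ forces $R$ to be $\delta$-simple. (This is also recorded directly in Lemma \ref{SimpleDiffRing}.) Second, maximal commutativity: this is exactly the content of Corollary \ref{cor_deltasimple2}, which asserts that for a commutative domain $R$ of characteristic zero, simplicity of $\DiffPol$ implies $R$ is maximal commutative in $\DiffPol$. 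Combining these two facts gives (ii).

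I expect no genuine obstacle, as the deep arguments live in Propositions \ref{DeltaMaxCommSimple} and \ref{MaxCommutativityDiffRing}, and the work here is essentially bookkeeping. The only place the hypotheses on $R$ really enter is in the maximal-commutativity half of (i) $\Rightarrow$ (ii), where they are used through the chain Lemma \ref{SimpleDiffRing} (simplicity forces $\delta$ to be outer, hence non-zero) followed by Proposition \ref{MaxCommutativityDiffRing} (a non-zero derivation on such an $R$ makes $R$ maximal commutative). The single point deserving a careful sentence is the reduction of $\sigma$-$\delta$-simplicity to $\delta$-simplicity when $\sigma = \identity_R$, to ensure that Proposition \ref{SimpleInvariant} applies cleanly in the present setting.
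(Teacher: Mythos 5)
Your proposal is correct and follows essentially the same route as the paper: the direction (ii) $\Rightarrow$ (i) is exactly Proposition \ref{DeltaMaxCommSimple}, and your argument for (i) $\Rightarrow$ (ii) via Lemma \ref{SimpleDiffRing} together with Corollary \ref{cor_deltasimple2} merely unrolls the paper's own chain (simplicity gives $\delta$-simplicity and an outer, hence for commutative $R$ non-zero, derivation; then Proposition \ref{MaxCommutativityDiffRing} gives maximal commutativity). Your careful remark that $\sigma$-$\delta$-simplicity reduces to $\delta$-simplicity when $\sigma=\identity_R$ is accurate and consistent with the paper's use of Lemma \ref{SimpleDiffRing}.
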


\begin{proof}
(i)$\Rightarrow$(ii): By Lemma \ref{SimpleDiffRing} $\delta$ is non-zero and $R$ is $\delta$-simple.
The result now follows from Proposition \ref{MaxCommutativityDiffRing}.\\
(ii)$\Rightarrow$(i): This follows from Proposition \ref{DeltaMaxCommSimple}.
\end{proof}

\section*{Acknowledgements}
The first author was partially supported by 
The Swedish Research Council (postdoctoral fellowship no. 2010-918) and The Danish National Research Foundation (DNRF) through 
the Centre for Symmetry and Deformation. The first and second authors were both supported by the Mathematisches Forschungsinstitut Oberwolfach as Oberwolfach Leibniz graduate students. 

This research was also supported in part by
the Swedish Foundation for International Cooperation in Research and Higher Education (STINT),
The Swedish Research Council (grant no. 2007-6338), The Royal Swedish Academy of Sciences, The Crafoord
Foundation and the NordForsk Research Network ``Operator Algebras and Dynamics'' (grant no. 11580). The authors are extremely grateful to David A. Jordan for making the effort to scan 
\cite{Jordan} and kindly providing them with a PDF-copy. The authors are also grateful to Steven Deprez, Ken Goodearl and Anna Torstensson for comments on some of the problems and results
considered in this paper.
The authors would like to thank an anonymous referee for making several helpful comments that lead to improvements of this article.

\bibliographystyle{amsalpha}

\end{document}